\newtheorem{theorem}{Theorem}[section]
\newtheorem{lemma}{Lemma}[section]
\newtheorem{proposition}{Proposition}[section]
\newtheorem{remark}{Remark}[section]
\numberwithin{equation}{section}
\newenvironment{proof}{{\noindent \bf Proof.}}{\hfill $\Box$}
\newenvironment{proof3.1}{{\noindent \bf Proof of Theorem 3.1.}}{\hfill $\Box$}
\begin{document}

\setlength{\baselineskip}{16pt}{\setlength\arraycolsep{2pt}}

\title{Polynomial stability of piezoelectric beams with magnetic effect and tip body }

\author{Yanning An,\ \ Wenjun Liu\footnote{Corresponding author.  \ \   Email address: wjliu@nuist.edu.cn (W. J. Liu).} \ \ and   \ Aowen Kong  \medskip\\School of Mathematics and Statistics,
Nanjing University of Information Science \\
and Technology, Nanjing 210044, China}

% cdqlinsey@163.com  zhijingchen@139.com

\date{}
\maketitle

\begin{abstract}
In this paper, we consider a dissipative system of one-dimensional piezoelectric beam with magnetic effect and a tip load at the free end of the beam, which is modeled as a special form of double boundary dissipation. Our main aim is to study the well-posedness and asymptotic behavior of this system. By introducing two functions defined on the right boundary, we first transform the original problem into a new abstract form, so as to  show the well-posedness of the system by using Lumer-Philips theorem. We then divide the original system into a conservative system and an auxiliary system, and show that the auxiliary problem generates a compact operator. With the help of Wely's theorem, we obtain that the system is not exponentially stable. Moreover, we prove the polynomial stability of the system by using a result of Borichev
and Tomilov (Math. Ann. {\bf 347} (2010), 455--478).
\end{abstract}

\noindent {\bf 2010 Mathematics Subject Classification:} 35B40, 95C20, 93D20. \\
\noindent {\bf Keywords:} polynomial stability, semigroup method, lack of exponential stability.

\maketitle

\section{Introduction }

Piezoelectric materials are materials that can exchange mechanical energy, electrical energy and nuclear energy in motion. Their structures are generally composed of beams or slabs. Due to the advantages like small size, high power density, fast response time, large mechanical force and high resolution, they have more and more application prospects in many fields, such as the latest cutting-edge applications: cardiac pacemaker \cite{dag2014}, course changing bullet, structural health monitoring \cite{giu2008}, nano locator \cite{guzhu2016}, ultrasonic imaging device, ultrasonic welding and cleaning device, energy collection \cite{ertinm2008}.
The piezoelectric effect usually is shown as two types. One is to generate charge in the interior by applying mechanical force, which is called direct piezoelectric effect \cite{ka1969,publi1996}. Another is from the external electric field through its internal mechanical stress, which is called reverse piezoelectric effect. Due to the asymmetry of crystals, the above two effects have the same origin \cite{galdin2000}. In the piezoelectric beam, which constitutes the electronic device, the mechanical disturbance responds in the form of electricity. When piezoelectric materials are integrated into components of electronic circuits, the mechanical effects on structures are also very important when they are interfered by electrical, magnetic or electromagnetic properties. There are three main ways to drive piezoelectric materials in such electronic devices: to supply voltage, current or charge to the electrodes. Therefore, it is very important to describe the interaction of these three effects (mechanical, electrical and magnetic) for understanding the stability conditions of these systems (\cite{bsw1996,dwg2010,yang2006}). The equation of piezoelectric beam with magnetic effect is based on the description of electromagnetic coupling by Maxwell equation and the mechanical behavior of beam by Mindlin-Timoshenko theory ({\cite{bsw1996,dwg2010}}).

Let us refer to several previous works on the stability results for the piezoelectric models. In \cite{mo2013,mo2014}, Morris and \"{O}zer considered the effects of three effects (mechanical, electrical and magnetic) for the first time. They studied the dissipative systems
\begin{equation} \label{other1.2}
\begin{aligned}
&\rho v_{tt}-\alpha v_{xx}+\gamma\beta p_{xx}=0,&(x,t)\in(0,L)\times (0,T),\\
&\mu p_{tt}-\beta p_{xx}+\gamma\beta v_{xx}=0,&(x,t)\in(0,L)\times (0,T),
\end{aligned}
\end{equation}
with boundary conditions
\begin{equation} \label{other1.2b}
\begin{aligned}
&v(0,t)=\alpha v_{x}(L,t)-\gamma\beta p_{x}(L,t)=0,& t\in (0,T),\\
&p(0,t)=\beta p_{x}(L,t)-\gamma\beta v_{x}(L,t)+V(t)=0, & t\in (0,T),
\end{aligned}
\end{equation}
where $v(x,t)$ and $p(x,t)$ represent respectively the displacement of the upper and lower plates, and $\rho,\mu,\alpha,\beta$  $\gamma$ denote respectively the mass density per unit volume, the magnetic permeability, the elastic stiffness, the beam coefficient of impermeability and the piezoelectric coefficient. The relationship between $\alpha,\beta$ and $\gamma$ is given as $\alpha=\alpha_{1}+\gamma^{2}\beta$, where $\alpha_{1}>0$ represents the elastic stiffness of the model derived from the electrostatic and quasi-static methods of Euler Bernoulli small displacement (see example \cite{mo2014}). And $V(t)=\frac{p_{t}(L,t)}{h}$ is the prescribed voltage on the beam electrodes. The authors showed that system \eqref{other1.2}-\eqref{other1.2b} with only one boundary control was not exponentially stable.

In \cite{rfajm2019}, Ramos et al. studied a one-dimensional system of piezoelectric beams with magnetic effect, the system is shown as
\begin{equation} \label{other1.3}
\begin{aligned}
&\rho v_{tt}-\alpha v_{xx}+\gamma\beta p_{xx}=0,&(x,t)\in(0,L)\times (0,T),\\
&\mu p_{tt}-\beta p_{xx}+\gamma\beta v_{xx}=0,&(x,t)\in(0,L)\times (0,T),
\end{aligned}
\end{equation}
with boundary conditions
\begin{equation} \label{other1.3b}
\begin{aligned}
&v(0,t)=\alpha v_{x}(L,t)-\gamma\beta p_{x}(L,t)+\frac{\xi_{1}}{h}v_{t}(L,t)=0,& t\in (0,T),\\
&p(0,t)=\beta p_{x}(L,t)-\gamma\beta v_{x}(L,t)+\frac{\xi_{2}}{h}p_{t}(L,t)=0,& t\in (0,T),
\end{aligned}
\end{equation}
where $\xi_{i},i=1,2$ are positive constant feedback gains. By using multiplier method, the authors proved that the system is exponentially stable, and obtained that the exponential stability is equivalent to the exact observability at the boundary.

Yang and Wang \cite{yw2017} studied the piezoelectric layer actuated by a voltage source without magnetic effects. They modeled the system
\begin{equation} \label{other1.1}
\left\{
\begin{aligned}
&u_{tt}-\alpha_{u}u_{xx}-G_{u}z=0,&(x,t)\in(0,L)\times (0,T),\\
&v_{tt}-\alpha_{v}v_{xx}+G_{v}z=0,&(x,t)\in(0,L)\times (0,T),\\
&y_{tt}-Iy_{xxtt}+y_{xxxx}-G_{1}z_{x}=0,&(x,t)\in(0,L)\times (0,T),\\
&\phi=\frac{1}{h}(-\varphi+\psi+H\omega_{x}),&(x,t)\in(0,L)\times (0,T),
\end{aligned}
\right.
\end{equation}
with the boundary conditions
\begin{equation} \label{other1.1b}
\left\{
\begin{aligned}
&u(0,t)=v(0,t)=y(0,t)=y_{x}(0,t)=0, & t\in (0,T),\\
&u_{x}(L,t)=l_{1}u_{t}(L,t),& t\in (0,T),\\
&v_{x}(L,t)=-l_{2}v_{t}(L,t),& t\in (0,T),\\
&y_{xx}(L,t)=-l_{3}y_{xt}(L,t),& t\in (0,T),\\
&Iy_{xtt}(L,t)-y_{xxx}(L,t)+G_{1}z(1,t)=0,& t\in (0,T),
\end{aligned}
\right.
\end{equation}
where $u(x,t)$ and $v(x,t)$ are the longitudinal displacements of the bottom layer and the top layer, respectively, and $y(x,t)$ is the transverse displacement of each floor (the transverse displacement of the three floors is regarded as equal). By using the method of Riesz basis, the authors obtained the exponential stability of system \eqref{other1.1}-\eqref{other1.1b}.

Recently, some researchers have studied Timoshenko system with tip body and hybrid system with tip load damped, see \cite{rvma2017,var2020,mun2015}.
In industry, many piezoelectric beam devices are in the form of a boundary with a tip body, such as the electrostatic energy harvester mentioned in reference \cite{erturk2011,sun2018}. The tip body has mass, so its appearance will bring tip inertia, which will affect the stability of the system. Therefore, it is necessary to study the piezoelectric beam system with tip load.

In this paper, we shall study the polynomial stability of a piezoelectric beam system with magnetic effect and tip body. Considering a piezoelectric beam with a tip load, the beam is clamped at $x = 0$, and the tip is fixed at $x = L$. The center of mass of the tip is the connection point between the tip body and the piezoelectric beam plate. We assume that the beam interacts with the tip body, and the force of the vibrating beam moves to the end load according to Newton's law. By using the feedback boundary force control to the displacement velocity at $x = L$, dissipation is introduced into the piezoelectric system. Then the coupling model is given by
\begin{equation} \label{problem1.1}
\begin{aligned}
&\rho V_{tt}-\alpha V_{xx}+\gamma\beta P_{xx}=0, &(x,t)\in(0,L)\times (0,T),\\
&\mu P_{tt}-\beta P_{xx}+\gamma\beta V_{xx}=0,  &(x,t)\in(0,L)\times (0,T),
\end{aligned}
\end{equation}
with the double boundary conditions
\begin{equation} \label{1.2}
\begin{aligned}
&V(0,t)=P(0,t)=0,& t\in (0,T),\\
&\alpha V_{x}(L,t)-\gamma\beta P_{x}(L,t)+\xi_{1}V_{t}(L,t)+m_{1}V_{tt}(L,t)=0,& t\in (0,T),\\
&\beta P_{x}(L,t)-\gamma\beta V_{x}(L,t)+\xi_{2}P_{t}(L,t)+m_{2}P_{tt}(L,t)=0,& t\in (0,T),
\end{aligned}
\end{equation}
and the initial conditions
\begin{align}\label{1.3}
\left(V(x,0),V_{t}(x,0),P(x,0),P_{t}(x,0)\right)=\left(V_{0}(x),V_{1}(x),P_{0}(x),P_{1}(x)\right),x\in(0,L).
\end{align}
where $V(x,t), P(x,t)$ represent respectively the longitudinal displacements of the upper and lower plates, $\xi_{1},\xi_{2}$ are positive constant feedback gains, and $ m_{1},m_{2}$ are mass of tip load. Equations \eqref{1.2}${_{2}}$ and \eqref{1.2}${_{3}}$ are obtained by the force balance at the end $x = L$. The third term in the boundary conditions at the end $x=L$ represents the influence of the magnetic effect, and the first two terms represent shear force.

In this paper, we study the stability of a piezoelectric beam with a tip on both plates.  After getting the well-posedness of the system by using the classical Lumer-Philips theorem, we will start to analyze the stability of the system. By dealing with the resolvent equation of system \eqref{problem1.1}-\eqref{1.3}, we obtain an observable inequality. Then, combined with Borichev and Tomilov theorem \cite{bor2010}, we will prove that the system is polynomial stable.
The difficulty of stability analysis lies in how to obtain that the system is lack of uniform stability. Because the system has only two equations, and the partial derivatives in the $x$ direction of the equations are second order. So it is difficult to construct a suitable function sequence and use the usual Gearhart-Herbst-Pr\"{u}ss-Huang theorem as in \cite{lkl2020,liuzhao2019} to prove that the system is not exponentially stable. To overcome this difficulty, we divide the original system into a conservative system and an auxiliary system, and show that the auxiliary problem generates a compact operator. By using the Wely's theorem \cite{weyl1910}, we get that the growth bound of the original system is $0$, that is, the system is not uniformly exponential stable. Since uniform stability and uniform exponential stability are equivalent in strongly continuous semigroups, we show that the system is not uniformly stable. Some typical problems  can be found in references \cite{liuzhao2019,bm2020,HWW2020,HS2021,jp2019,kam2016,lx2017,mmrpv2019,maycmg2019,p2018,tita2018,lkl2020,liuzhao2019,alv2020,mus2021,car2019,mess2018}.

The structure of this paper is as follows. In the next section, we will give the well-posedness of system \eqref{problem1.1}-\eqref{1.3}. In Section 3, we will show the lack of uniformly stability. Finally, we will get the polynomial stability of the system in Section 4.

\section{Well-posedness}

In this section, we give a well-posedness result for problem \eqref{problem1.1}-\eqref{1.3} by using a semigroup approach.

To define the semigroup associated with \eqref{problem1.1}-\eqref{1.3}, we introduce two new functions which are defined by
\begin{align}\label{2.1}
u(t)=V_{t}(L,t) \quad\quad \mathrm{and} \quad\quad \eta(t)=P_{t}(L,t), \quad\quad t>0,
\end{align}
respectively, with
\begin{align}\label{2.2}
u(0)=V_{1}(L)=u_{0} \quad\quad\mathrm{and}\quad\quad \eta (0)=P_{1}(L)=\eta_{0}.
\end{align}
By using the definition of $u,\eta $, we can change system \eqref{problem1.1}-\eqref{1.3}  to
\begin{align}
&\rho V_{tt}-\alpha V_{xx}+\gamma\beta P_{xx}=0,\quad (x,t)\in  (0,L)\times (0,T), \label{2.3}\\
&\mu P_{tt}-\beta P_{xx}+\gamma\beta V_{xx}=0,   \quad (x,t)\in(0,L)\times (0,T),\label{2.4}
\end{align}
with the boundary conditions
\begin{align}
& V(0,t)=\alpha V_{x}(L,t)-\gamma\beta P_{x}(L,t)+\xi_{1} u(t)+m_{1}u_{t}(t)=0,&t\in(0,+\infty), \label{2.5}\\
& P(0,t)=\beta P_{x}(L,t)-\gamma\beta V_{x}(L,t)+\xi_{2} \eta(t)+m_{2}\eta_{t}(t)=0,& t\in(0,+\infty), \label{2.6}
\end{align}
and the initial conditions
\begin{align}\label{2.7}
\left(V(x,0), V_{t}(x,0), P(x,0), P_{t}(x,0),u(0),\eta (0)\right)=\left(V_{0},V_{1},P_{0},P_{1},u_{0},\eta_{0}\right)\quad x \in (0,L).
\end{align}

The energy of system \eqref{2.3}-\eqref{2.7} is given by
\begin{align}\label{2.8}
E(t)=\frac{1}{2}\int^{L}_{0}\left[\rho\left|V_{t}\right|^{2}+\alpha_{1}\left|V_{x}\right|^{2}+\mu\left|P_{t}\right|^{2}+\beta\left|\gamma V_{x}-P_{x}\right|^{2}\right] dx+\frac{m_{1}}{2}|u|^{2}+\frac{m_2}{2}|\eta|^{2}.
\end{align}
Multiplying \eqref{2.3}, \eqref{2.4} by $ V_{t}$ and $P_{t}$ respectively, and using the boundary conditions \eqref{2.5}-\eqref{2.6}, we get
\begin{align}\label{2.9}
\frac{d}{dt}E(t)=-\xi_{1}\left|V_{t}(L,t)\right|^{2}-\xi_{2}\left|P_{t}(L,t)\right|^{2}.
\end{align}

Let us define the space $\mathcal{H}$ as
\begin{align*}
\mathcal{H}:=H^{1}_{*}(0,L)\times {L}^{2}(0,L)\times {{H}}^{1}_{*}(0,L)\times {{L}}^{2}(0,L)\times \mathbb{C}\times \mathbb{C},
\end{align*}
for $ {{H}}^{1}_{*}(0,L)=\left\{f\in{{H}}^{1}(0,L):f(0)=0\right\}$, equipped with the inner product
\begin{align*}
\langle{U}_{1},{U}_{2}\rangle_{\mathcal{H}}=&\int^{L}_{0}\left[\rho\Phi_{1}\overline{\Phi}_{2}+\mu\Theta_{1}\overline{\Theta}_{2}+\alpha_{1}V_{1,x}\overline{V}_{2,x}+\beta(\gamma V_{1,x}-P_{1,x})\overline{(\gamma V_{2,x}-P_{2,x})}\right]dx \\
&+m_{1}u_{1}\overline{u}_{2}+m_{2}\eta_{1}\overline{\eta}_{2},
\end{align*}
where ${{U}}_{i}=\left(V_{i},\Phi_{i}, P_{i}, \Theta_{i}, u_{i}, \eta_{i}\right)\in {H},i=1,2$.
Set the vector function ${{U}}=\left(V,V_{t}, P, P_{t}, u, \eta\right)^{T}$, then system \eqref{2.3}-\eqref{2.7} can be written as
\begin{equation} \label{2.10}
\left\{
\begin{aligned}
&{{U}}_{t}={\mathcal{A}}{{U}}\\
&{{U}}(0)={{U}}_{0}
\end{aligned}
\right.
\end{equation}
where ${{U}}_{0}=\left(V_{0},V_{1}, P_{0}, P_{1}, u_{0}, \eta_{0}\right)^{T}$ and $\mathcal{A}:\mathcal{D}(\mathcal{A})\subset \mathcal{H}\rightarrow \mathcal{H}$ is given by
\begin{align*}
\mathcal{A}=
\begin{bmatrix}
 0 & \mathrm{I} & 0 & 0 & 0 & 0  \\
  \frac{\alpha}{\rho}\partial_{xx} & 0 & -\frac{\gamma\beta}{\rho}\partial_{xx} & 0 & 0 & 0 \\
  0 & 0 & 0 & \mathrm{I} & 0 & 0 \\
  -\frac{\gamma\beta}{\mu}\partial_{xx} & 0 & \frac{\beta}{\mu}\partial_{xx} & 0 & 0 & 0 \\
  -\frac{\alpha}{m_{1}}\varsigma & 0 & \frac{\gamma\beta}{m_{1}}\varsigma & 0 & -\frac{\xi_{1}}{m_{1}}\mathrm{I} & 0 \\
  \frac{\gamma\beta}{m_{2}}\varsigma & 0 & -\frac{\beta}{m_{2}}\varsigma & 0 & 0 & -\frac{\xi_{2}}{m_{2}}\mathrm{I}
\end{bmatrix}
,
\end{align*}
with $\varsigma \circ \varphi=\varphi_{x}(L)$. The domain of the operator $\mathcal{A}$ is given by
\begin{align*}
\mathcal{D}(\mathcal{A}):=\left\{{{U}}\in \mathcal{H}; \Phi,\Theta\in {H}^{1}_{*}(0,L), V,P \in {H}^{2}(0,L), \Phi(L)=u, \Theta(L)=\eta\right\},
\end{align*}
with ${{U}}=\left(V,\Phi, P, \Theta, u, \eta\right)$.

We now show that operator $\mathcal{A}$ generates a ${C}_{0}-$semigroup $\{\mathcal{S}_{\mathcal{A}}(t)\}_{t\geq0}$ of contractions in the space $\mathcal{H}$. For this purpose, we need the following two lemmas.
\begin{lemma}\label{lem2.1}
The operator $\mathcal{A}$ is dissipative and satisfies that for any ${U}\in \mathcal{D(\mathcal{A})}$,
\begin{align}\label{2.11}
{Re}\langle  {\mathcal{A}}{U},{U}\rangle_{{H}}=-\xi_{1}|u|^{2}-\xi_{2}|\eta|^{2}\leq 0.
\end{align}
\end{lemma}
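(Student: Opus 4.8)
The plan is to verify \eqref{2.11} by a direct computation: take an arbitrary $U = (V, \Phi, P, \Theta, u, \eta) \in \mathcal{D}(\mathcal{A})$, compute $\mathcal{A}U$ explicitly using the matrix definition, and then pair it against $U$ in the inner product of $\mathcal{H}$. Concretely, $\mathcal{A}U$ has components $\big(\Phi,\ \tfrac{\alpha}{\rho}V_{xx} - \tfrac{\gamma\beta}{\rho}P_{xx},\ \Theta,\ -\tfrac{\gamma\beta}{\mu}V_{xx} + \tfrac{\beta}{\mu}P_{xx},\ -\tfrac{\alpha}{m_1}V_x(L) + \tfrac{\gamma\beta}{m_1}P_x(L) - \tfrac{\xi_1}{m_1}u,\ \tfrac{\gamma\beta}{m_2}V_x(L) - \tfrac{\beta}{m_2}P_x(L) - \tfrac{\xi_2}{m_2}\eta\big)$. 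Substituting into $\langle \mathcal{A}U, U\rangle_{\mathcal{H}}$ and recalling $\alpha = \alpha_1 + \gamma^2\beta$, I would regroup the terms coming from the first four components so that the $\rho$- and $\mu$-weighted terms cancel against part of the stiffness terms, leaving boundary contributions after integration by parts.

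The key step is the integration by parts. The term $\int_0^L \alpha V_{xx}\overline{\Phi}\,dx$ (and the analogous $P_{xx}$, cross terms) must be integrated by parts to produce $\alpha V_x(L)\overline{\Phi(L)} - \int_0^L \alpha V_x \overline{\Phi_x}\,dx$, using $\Phi(0) = 0$ to kill the boundary term at $x=0$. Doing this for all four second-derivative terms and collecting, the interior integrals $-\int_0^L (\alpha V_x\overline{\Phi_x} + \dots)\,dx$ should — after using $\alpha = \alpha_1 + \gamma^2\beta$ and expanding $\beta(\gamma V_x - P_x)\overline{(\gamma \Phi_x - \Theta_x)}$ — exactly cancel the corresponding pieces that arise when one also computes the conjugate pairing, so that taking the real part annihilates the interior. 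What survives is the sum of boundary terms at $x = L$ together with the contributions from the last two components (the $u$ and $\eta$ ODE parts weighted by $m_1, m_2$). The boundary terms at $x=L$ are precisely $\big(\alpha V_x(L) - \gamma\beta P_x(L)\big)\overline{u} + \big(\beta P_x(L) - \gamma\beta V_x(L)\big)\overline{\eta}$ (after using $\Phi(L) = u$, $\Theta(L) = \eta$), and these cancel against the $-\alpha V_x(L) + \gamma\beta P_x(L)$ and $\gamma\beta V_x(L) - \beta P_x(L)$ terms carried into the $m_1\overline{u}$, $m_2\overline{\eta}$ slots. After all cancellations the only remaining terms are $-\xi_1 |u|^2 - \xi_2|\eta|^2$, which is already real and $\leq 0$ since $\xi_1, \xi_2 > 0$.

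The main obstacle is purely bookkeeping: keeping track of signs and of the cross terms $\gamma\beta P_{xx}\overline{\Phi}$ and $\gamma\beta V_{xx}\overline{\Theta}$ so that the substitution $\alpha = \alpha_1 + \gamma^2\beta$ correctly converts the raw stiffness pairing into the energy-norm form $\alpha_1 V_x\overline{V}_x + \beta(\gamma V_x - P_x)\overline{(\gamma V_x - P_x)}$ used in the inner product. There is no analytic subtlety — the regularity $V, P \in H^2(0,L)$ and $\Phi, \Theta \in H^1_*(0,L)$ built into $\mathcal{D}(\mathcal{A})$ is exactly what makes every integration by parts and every boundary evaluation legitimate. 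Once \eqref{2.11} is established, dissipativity of $\mathcal{A}$ follows immediately from $\operatorname{Re}\langle \mathcal{A}U, U\rangle_{\mathcal{H}} = -\xi_1|u|^2 - \xi_2|\eta|^2 \leq 0$.
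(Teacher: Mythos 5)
Your proposal is correct and is exactly the computation the paper invokes: the paper's own proof is the one-line remark that \eqref{2.11} "can be easily verified by using the inner product in $\mathcal{H}$ and integration by parts," and your write-up simply carries out that verification in detail (integration by parts with $\Phi(0)=\Theta(0)=0$, the substitution $\alpha=\alpha_1+\gamma^2\beta$ to match the energy norm, cancellation of the $x=L$ boundary terms against the $u$- and $\eta$-components, and the interior terms dropping out upon taking real parts). No gap; same approach.
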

\begin{proof}
For any ${U}\in \mathcal{D(\mathcal{A})}$, relation \eqref{2.11} can be easily verified by using the inner product in $\mathcal{H}$ and integration by parts.
\end{proof}
\begin{lemma}\label{lem2.2}
The operator $\mathcal{A}$ is bijective and $0\in \varrho(\mathcal{A})$, where $\varrho(\mathcal{A})$ is the resolvent set of $\mathcal{A}$.
\end{lemma}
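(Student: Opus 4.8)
The plan is to prove surjectivity and injectivity simultaneously: given $F=(f^1,f^2,f^3,f^4,f^5,f^6)\in\mathcal H$, I will exhibit the unique $U=(V,\Phi,P,\Theta,u,\eta)\in\mathcal D(\mathcal A)$ with $\mathcal AU=F$ and then show $\mathcal A^{-1}$ is bounded. Reading off the rows of $\mathcal A$, the first and third rows force $\Phi=f^1$ and $\Theta=f^3$, which already lie in $H^1_*(0,L)$ as the domain requires, and compatibility with $\mathcal D(\mathcal A)$ fixes $u=\Phi(L)=f^1(L)$, $\eta=\Theta(L)=f^3(L)$. The fifth and sixth rows then become two scalar identities
\begin{align*}
\alpha V_x(L)-\gamma\beta P_x(L)&=-m_1f^5-\xi_1 f^1(L),\\
\beta P_x(L)-\gamma\beta V_x(L)&=-m_2f^6-\xi_2 f^3(L),
\end{align*}
which I will treat as Neumann-type boundary data at $x=L$, while the second and fourth rows yield the coupled elliptic system $\alpha V_{xx}-\gamma\beta P_{xx}=\rho f^2$, $-\gamma\beta V_{xx}+\beta P_{xx}=\mu f^4$ on $(0,L)$, to be solved together with $V(0)=P(0)=0$ (automatic from $V,P\in H^1_*(0,L)$).

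Next I would invert the constant coefficient matrix $M=\begin{bmatrix}\alpha&-\gamma\beta\\-\gamma\beta&\beta\end{bmatrix}$. Since $\alpha=\alpha_1+\gamma^2\beta$ with $\alpha_1>0$, one has $\det M=\alpha\beta-\gamma^2\beta^2=\alpha_1\beta>0$, so $M$ is positive definite and invertible; hence $(V_{xx},P_{xx})^T=M^{-1}(\rho f^2,\mu f^4)^T=:(g^1,g^2)^T\in L^2(0,L)^2$ is determined. Integrating twice gives $V_x(x)=c_1+\int_0^xg^1$, $V(x)=c_1x+\int_0^x\!\int_0^sg^1$, and similarly for $P$ with a constant $c_2$, the conditions at $0$ having been used. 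Substituting $V_x(L)=c_1+\int_0^Lg^1$, $P_x(L)=c_2+\int_0^Lg^2$ into the two boundary identities at $L$ turns them into a linear system for $(c_1,c_2)$ with coefficient matrix again equal to $M$; invertibility of $M$ gives a unique $(c_1,c_2)$. This produces $V,P\in H^2(0,L)$, so $U\in\mathcal D(\mathcal A)$ and $\mathcal AU=F$, and uniqueness is immediate since every choice above was forced. Thus $\mathcal A$ is bijective.

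Finally, to get $0\in\varrho(\mathcal A)$ it remains to observe that $\mathcal A^{-1}$ is bounded, which I would obtain by making the construction quantitative: the Poincar\'e inequality on $H^1_*(0,L)$ controls $|f^1(L)|,|f^3(L)|$ by $\|f^1\|_{H^1},\|f^3\|_{H^1}$; $\|g^i\|_{L^2}\le C(\|f^2\|_{L^2}+\|f^4\|_{L^2})$ because $M^{-1}$ is a fixed matrix; $|c_1|+|c_2|\le C\|F\|_{\mathcal H}$ because the linear system for $(c_1,c_2)$ again has the fixed invertible matrix $M$; and then $\|V\|_{H^1},\|P\|_{H^1},|u|,|\eta|\le C\|F\|_{\mathcal H}$, so $\|U\|_{\mathcal H}\le C\|F\|_{\mathcal H}$. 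This bound shows $\mathcal A^{-1}\in\mathcal L(\mathcal H)$, hence $0\in\varrho(\mathcal A)$. The only genuine subtlety I anticipate is the one isolated above: everything rests on $\det M=\alpha_1\beta>0$, i.e. on the structural relation $\alpha=\alpha_1+\gamma^2\beta$ with $\alpha_1>0$ (without it the elliptic system degenerates and the system for $(c_1,c_2)$ becomes singular), together with keeping careful track of the coupling between the bulk unknowns $V,P$ and the boundary unknowns $u,\eta$; the remainder is routine ODE integration and elliptic estimates.
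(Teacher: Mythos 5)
Your proposal is correct, and it reaches the conclusion by a genuinely different (more elementary) route than the paper. Both arguments begin identically: read off $\Phi=f^1$, $\Theta=f^3$, hence $u=f^1(L)$, $\eta=f^3(L)$ from the domain condition, and reduce to the coupled second-order system for $(V,P)$ with the two Neumann-type conditions at $x=L$. From there the paper goes variational: it introduces the sesquilinear form $\mathcal G\bigl((w_1,s_1),(w_2,s_2)\bigr)=\int_0^L(\alpha w_{1,x}\overline w_{2,x}-\gamma\beta w_{1,x}\overline s_{2,x}-\gamma\beta s_{1,x}\overline w_{2,x}+\beta s_{1,x}\overline s_{2,x})\,dx$ on $H^1_*(0,L)\times H^1_*(0,L)$, whose coercivity comes from the identity $\alpha|w_x|^2-2\gamma\beta\,\mathrm{Re}(w_x\overline s_x)+\beta|s_x|^2=\alpha_1|w_x|^2+\beta|\gamma w_x-s_x|^2$ --- exactly the positivity $\det M=\alpha_1\beta>0$ that you isolate --- and applies Lax--Milgram, absorbing the boundary data into the linear functional $\mathcal F$. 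You instead invert the constant matrix $M$ pointwise, integrate twice, and pin down the two integration constants from the boundary conditions at $L$, which again involve $M$. What your approach buys: explicit formulas, so that $V,P\in H^2(0,L)$ and the resolvent bound $\|U\|_{\mathcal H}\le C\|F\|_{\mathcal H}$ are immediate and transparent (the paper's passage from $|\mathcal G(U,F)|\le C\|U\|_{\mathcal H}\|F\|_{\mathcal H}$ to $\|\mathcal A^{-1}F\|_{\mathcal H}\le\|F\|_{\mathcal H}$ is stated rather tersely, and in the variational setting one must also recover the $H^2$-regularity and the natural boundary conditions from the weak formulation before concluding $U\in\mathcal D(\mathcal A)$). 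What it costs: your argument is tied to the one-dimensional structure, whereas the Lax--Milgram route would survive in higher dimensions. One small point of hygiene: the constant $C$ in your final estimate is fine, but note that $0\in\varrho(\mathcal A)$ only requires $\mathcal A^{-1}\in\mathcal L(\mathcal H)$, not the normalized bound $\|\mathcal A^{-1}\|\le 1$ that the paper (somewhat optimistically) writes; your version is the defensible one.
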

\begin{proof}
We need to prove that for any ${F}=\left(f_{1},f_{2},f_{3},f_{4},f_{5},f_{6}\right)\in \mathcal{H}$, there exists a ${U}=(V,\Phi,P,\Theta,u,\eta)\in \mathcal{D}(\mathcal{A})$ such that
\begin{align*}
\mathcal{A}{U}={F}.
\end{align*}
Equivalently, we shall consider the existence of unique solution of the system
\begin{equation}\label{2.12}
\left\{
\begin{aligned}
&\Phi=f_{1} \quad\mathrm{in} \quad {H}^{1}_{*}(0,L),\\
& \frac{\alpha}{\rho}V_{xx}-\frac{\gamma\beta}{\rho}=f_{2} \quad \mathrm{in} \quad {L}^{2}(0,L),\\
&\Theta=f_{3} \quad \mathrm{in} \quad {H}^{1}_{*}(0,L),\\
&\frac{\beta}{\mu}P_{xx}-\frac{\gamma\beta}{\mu}=f_{4} \quad \mathrm{in} \quad {L}^{2}(0,L),\\
&-\frac{\alpha}{m_{1}}V_{x}(L,t)+\frac{\gamma\beta}{m_{1}}P_{x}(L,t)-\frac{\xi_{1}}{m_{1}}u=f_{5},\\
&-\frac{\beta}{m_{2}}P_{x}(L,t)+\frac{\gamma\beta}{m_{1}}V_{x}(L,t)-\frac{\xi_{2}}{m_{2}}\eta=f_{6}.
\end{aligned}
\right.
\end{equation}
That is, since
\begin{align}\label{2.13}
\Phi=f_{1},\quad & \quad \Theta=f_{3},
\nonumber\\
u=\Phi(L)=f_{1}(L), \quad &\quad \eta=\Theta(L)=f_{3}(L),
\end{align}
 we need to prove the existence of unique solution of the system
\begin{equation}\label{2.14}
\left\{
\begin{aligned}
& {\alpha}V_{xx}-{\gamma\beta}P_{xx}=\rho f_{2} ,\\
&\beta P_{xx}-\gamma\beta V_{xx}=\mu f_{4} ,\\
&-{\alpha}V_{x}(L,t)+{\gamma\beta}P_{x}(L,t)=m_{1}f_{5}+\xi_{1}f_{1}(L),\\
&-{\beta}P_{x}(L,t)+{\gamma\beta}V_{x}(L,t)=m_{2}f_{6}+{\xi_{2}}f_{3}(L).
\end{aligned}
\right.
\end{equation}

Consider a coercive and continuous and semi-linear operator $\mathcal{G}:\left[{H}^{1}_{*}(0,L)\times{H}^{1}_{*}(0,L)\right]^{2}\rightarrow \mathbb{C}$ defined by
\begin{align*}
\mathcal{G}\left((w_{1},s_{1}),(w_{2},s_{2})\right)=\int^{L}_{0}\left(\alpha w_{1,x}\overline{w}_{2,x}-\gamma\beta w_{1,x}\overline{s}_{2,x}-\gamma\beta s_{1,x}\overline{w}_{2,x}+\beta s_{1,x}\overline{s}_{2,x}\right)dx,
\end{align*}
and a continuous linear functional $\mathcal{F}:{H}^{1}_{*}(0,L)\times{H}^{1}_{*}(0,L)\rightarrow \mathbb{C}$ defined by
\begin{align*}
\mathcal{F}(w,s)=-\int^{L}_{0}\left(\rho f_{2}\overline{w}+\mu f_{4}\overline{s}\right)dx-\left(m_{1}f_{5}+\xi_{1}f_{1}(L)\right)\overline{w}(L)-\left(m_{2}f_{6}+\xi_{2}f_{3}(L)\right)\overline{s}(L).
\end{align*}
By using the Lax-Milgram theorem, we know that there exists a $(V,P)\in {H}^{1}_{*}(0,L)\times {H}^{1}_{*}(0,L)$ satisfying
\begin{align*}
\mathcal{G}\left((V,P),(w,s)\right)=\mathcal{F}(w,s),\quad for\;\;all \quad (w,s)\in {H}^{1}_{*}(0,L)\times{H}^{1}_{*}(0,L).
\end{align*}
From the estimate and \eqref{2.13}, there exists a constant $C>0$ such that
\begin{align*}
\left|\mathcal{G}({U},{F})\right|\leq C\| {U} \|_{\mathcal{H}}\|{F}\|_{\mathcal{H}},
\end{align*}
which implies that
\begin{align*}
\|{U}\|_{\mathcal{H}}\leq \|{F}\|_{\mathcal{H}}\quad \Longleftrightarrow \quad \|{\mathcal{A}}^{-1}{F}\|_{\mathcal{H}}\leqslant \|{F}\|_{\mathcal{H}}.
\end{align*}
Consequence, we conclude that the operator $\mathcal{A}$ generates a ${C}_{0}-$semigroup $\{\mathcal{S}_{\mathcal{A}}(t)\}_{t\geq0}$ of contractions on the space $\mathcal{H}$ by Lumer-Philips theorem \cite{kim1987}. Thus, the proof of the lemma is completed.
\end{proof}

Hence,  using Lemma \ref{lem2.1} and Lemma \ref{lem2.2}, we obtain the well-posedness result.
\begin{theorem}\label{thm2.1}
Let ${U}_{0}\in \mathcal{D(\mathcal{A})}$, there exists a unique solution ${U}(t)=\mathcal{S}_{\mathcal{A}}(t){U}_{0}$ of \eqref{2.10} such that
\begin{align*}
{U}\in{C}\left([0,\infty);\mathcal{D}(\mathcal{A})\right)\cap{C}^{1} \left([0,\infty); \mathcal{H}\right).
\end{align*}
\end{theorem}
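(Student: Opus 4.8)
The plan is to invoke the Lumer--Phillips theorem in its standard form: a linear operator $\mathcal{A}$ on a Hilbert space generates a $C_0$-semigroup of contractions if and only if $\mathcal{A}$ is dissipative and $\mathrm{Range}(\lambda I - \mathcal{A}) = \mathcal{H}$ for some (equivalently all) $\lambda > 0$. The two hypotheses have essentially been secured by the preceding lemmas: Lemma~\ref{lem2.1} gives dissipativity via $\mathrm{Re}\langle \mathcal{A}U, U\rangle_{\mathcal{H}} = -\xi_1|u|^2 - \xi_2|\eta|^2 \le 0$, and Lemma~\ref{lem2.2} gives $0 \in \varrho(\mathcal{A})$, which in particular provides surjectivity of $0\cdot I - \mathcal{A} = -\mathcal{A}$, hence the range condition. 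One should also note $\mathcal{D}(\mathcal{A})$ is dense in $\mathcal{H}$ (the $H^2$ functions with the stated trace constraints are dense in the product space), though for contraction semigroups density together with dissipativity and the range condition suffices. So the first step is simply to assemble these facts and conclude that $\mathcal{A}$ is the infinitesimal generator of a $C_0$-semigroup of contractions $\{\mathcal{S}_{\mathcal{A}}(t)\}_{t\ge 0}$.

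Once generation is established, the regularity statement is the classical semigroup regularity theorem: if $\mathcal{A}$ generates a $C_0$-semigroup and $U_0 \in \mathcal{D}(\mathcal{A})$, then $U(t) = \mathcal{S}_{\mathcal{A}}(t)U_0$ is the unique (classical) solution of the abstract Cauchy problem \eqref{2.10}, with $U \in C^1([0,\infty);\mathcal{H})$ and $U(t) \in \mathcal{D}(\mathcal{A})$ for all $t$, and $t \mapsto U(t)$ continuous into $\mathcal{D}(\mathcal{A})$ equipped with the graph norm. The second step is therefore to cite this theorem (e.g.\ from Pazy's book) and read off exactly the conclusion $U \in C([0,\infty);\mathcal{D}(\mathcal{A})) \cap C^1([0,\infty);\mathcal{H})$. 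Uniqueness follows from the fact that any classical solution of $U_t = \mathcal{A}U$ with $U(0) = U_0$ must coincide with $\mathcal{S}_{\mathcal{A}}(t)U_0$, a standard consequence of the semigroup property.

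There is really no serious obstacle here, since the theorem is a direct corollary of Lemma~\ref{lem2.1}, Lemma~\ref{lem2.2}, and the abstract semigroup machinery; the genuine work was already done in verifying dissipativity and, more substantively, in the Lax--Milgram argument of Lemma~\ref{lem2.2} establishing surjectivity. If I wanted to be careful about one point, it would be confirming that the operator $\varsigma$ (the trace of the $x$-derivative at $L$) is well defined on the components of $\mathcal{D}(\mathcal{A})$, i.e.\ that $V, P \in H^2(0,L)$ makes $V_x(L), P_x(L)$ meaningful by the Sobolev embedding $H^1(0,L) \hookrightarrow C([0,L])$ — but this is implicit in the definition of $\mathcal{D}(\mathcal{A})$ and poses no difficulty. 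Thus the proof of Theorem~\ref{thm2.1} amounts to: combine the two lemmas, apply Lumer--Phillips to get the contraction semigroup, and apply the standard regularity result for initial data in the domain.
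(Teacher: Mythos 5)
Your proposal is correct and follows essentially the same route as the paper: the paper likewise combines the dissipativity of Lemma~\ref{lem2.1} with the surjectivity/resolvent fact $0\in\varrho(\mathcal{A})$ of Lemma~\ref{lem2.2}, applies the Lumer--Phillips theorem to obtain the contraction semigroup, and then reads off the stated regularity from the standard theory for initial data in $\mathcal{D}(\mathcal{A})$. Your added remarks on density and on the trace $\varsigma$ being well defined are sound and, if anything, slightly more careful than the paper's own one-line conclusion.
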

\section{Lack of Uniformly Stability}
In this section, we are interested in studying the lack of uniformly stability of the solution of problem \eqref{2.3}-\eqref{2.7}. To show that, we will use the following theorem as a tool.
% The following theorems and lemmas will be important in that follows.
\begin{theorem}\label{thm3.1}
(\cite{eng2000})
Let $\mathcal{S}(t)={e}^{\mathcal{A}t}$ be a contraction ${C}_{0}-$semigroup on Hilbert space. Then
\begin{align*}
\omega_{0}(\mathcal{S}(t))=\max\{\omega_{ess}(\mathcal{S}(t)),s(\mathcal{A})\},
\end{align*}
where $\omega_{0}(\mathcal{S}(t))$ is the growth bound, $\omega_{ess}(\mathcal{S}(t))$ is the essential growth bound, and $s(\mathcal{A})$ is the spectral bound of the infinitesimal generator $\mathcal{A}$ of $\mathcal{S}(t)$.
\end{theorem}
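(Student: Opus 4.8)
The plan is to prove the two inequalities whose combination gives the identity; only one is substantial. For $\max\{\omega_{ess}(\mathcal{S}(t)),s(\mathcal{A})\}\le\omega_{0}(\mathcal{S}(t))$ I would argue straight from the definitions. Since the essential norm of a bounded operator never exceeds its operator norm, passing to the limits $\omega_{ess}=\lim_{t\to\infty}\tfrac{1}{t}\log\|\mathcal{S}(t)\|_{ess}$ and $\omega_{0}=\lim_{t\to\infty}\tfrac{1}{t}\log\|\mathcal{S}(t)\|$ yields $\omega_{ess}\le\omega_{0}$; and since the spectral inclusion $e^{t\sigma(\mathcal{A})}\subseteq\sigma(\mathcal{S}(t))$ always holds for a $C_{0}$-semigroup, choosing $\lambda\in\sigma(\mathcal{A})$ with $\mathrm{Re}\,\lambda$ arbitrarily close to $s(\mathcal{A})$ gives $e^{t\,\mathrm{Re}\,\lambda}\le r(\mathcal{S}(t))=e^{t\omega_{0}}$, hence $s(\mathcal{A})\le\omega_{0}$.

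For the reverse inequality I would fix $t_{0}>0$ and work inside the spectrum of the single bounded operator $\mathcal{S}(t_{0})$, recalling $r(\mathcal{S}(t_{0}))=e^{t_{0}\omega_{0}}$ and $r_{ess}(\mathcal{S}(t_{0}))=e^{t_{0}\omega_{ess}}$. If $\omega_{0}=\omega_{ess}$ there is nothing to prove, so assume $r(\mathcal{S}(t_{0}))>r_{ess}(\mathcal{S}(t_{0}))$. The structural input I would quote (Nussbaum--Browder theory of the essential spectrum, as developed in \cite{eng2000}) is that the part of $\sigma(\mathcal{S}(t_{0}))$ lying strictly outside the disk of radius $r_{ess}(\mathcal{S}(t_{0}))$ consists only of isolated eigenvalues of finite algebraic multiplicity. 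As $\sigma(\mathcal{S}(t_{0}))$ is compact and nonempty I can pick $\lambda_{0}\in\sigma(\mathcal{S}(t_{0}))$ with $|\lambda_{0}|=r(\mathcal{S}(t_{0}))>r_{ess}(\mathcal{S}(t_{0}))$, so $\lambda_{0}$ is such an eigenvalue and $\lambda_{0}\in\sigma_{p}(\mathcal{S}(t_{0}))\setminus\{0\}$. Then the spectral mapping theorem for the point spectrum of a $C_{0}$-semigroup provides $\mu_{0}\in\sigma_{p}(\mathcal{A})$ with $e^{t_{0}\mu_{0}}=\lambda_{0}$, whence $\mathrm{Re}\,\mu_{0}=\tfrac{1}{t_{0}}\log|\lambda_{0}|=\omega_{0}$ and therefore $s(\mathcal{A})\ge\omega_{0}$. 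This gives $\omega_{0}\le s(\mathcal{A})\le\max\{\omega_{ess},s(\mathcal{A})\}$, and combining with the first inequality completes the proof.

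The main obstacle --- and presumably the reason the result is cited rather than reproved here --- is the spectral-theory input rather than the semigroup bookkeeping: one must fix a notion of essential spectrum (equivalently of $\|\cdot\|_{ess}$ and of the essential spectral radius $r_{ess}$) for which simultaneously $r_{ess}(\mathcal{S}(t_{0}))=e^{t_{0}\omega_{ess}}$ (submultiplicativity of the essential norm plus Fekete's lemma, together with Nussbaum's formula for the essential spectral radius) and the part of the spectrum outside the essential spectral radius consists of isolated finite-multiplicity eigenvalues. On a Hilbert space the latter point is cleanest: near such a $\lambda_{0}$ the Riesz spectral projection has finite-dimensional range, which both certifies the finite algebraic multiplicity and justifies treating $\lambda_{0}$ as a genuine eigenvalue. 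The point-spectrum spectral mapping theorem itself is routine, following from the Laplace-transform/Taylor representation of the resolvent of $\mathcal{A}$ in terms of $\mathcal{S}(\cdot)$.
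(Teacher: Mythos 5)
Your argument is correct. Note, however, that the paper offers no proof of this statement at all: Theorem 3.1 is quoted verbatim from Engel--Nagel \cite{eng2000} (it is their Corollary IV.2.11), and what you have written is essentially the standard textbook proof that the citation points to. The easy inequalities $\omega_{ess}\le\omega_{0}$ (essential norm dominated by operator norm) and $s(\mathcal{A})\le\omega_{0}$ (spectral inclusion, or equivalently convergence of the resolvent integral for $\mathrm{Re}\,\lambda>\omega_{0}$) are handled as in the reference, and the substantial direction is exactly the Browder--Nussbaum argument: when $r(\mathcal{S}(t_{0}))>r_{ess}(\mathcal{S}(t_{0}))$, a peripheral spectral point is an isolated eigenvalue of finite algebraic multiplicity, and the point-spectrum spectral mapping theorem lifts it to $\mu_{0}\in\sigma_{p}(\mathcal{A})$ with $\mathrm{Re}\,\mu_{0}=\omega_{0}$. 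Two cosmetic remarks: in the first inequality you should dispose of the degenerate case $\sigma(\mathcal{A})=\emptyset$ (where $s(\mathcal{A})=-\infty$ by convention, so the bound is vacuous), and the contraction hypothesis in the statement is never needed --- the identity holds for any $C_{0}$-semigroup, which your proof correctly does not use.
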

\begin{theorem}\label{thm3.2}
(\cite[Weyl's Theorem]{weyl1910})
If the difference of the two operator is compact, then the essential spectrum radius are the same.
\end{theorem}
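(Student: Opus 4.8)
The plan is to route everything through the Calkin algebra, which makes the statement almost tautological once the standard objects are in place. Write $\mathcal{H}$ for the ambient Hilbert space, $\mathcal{B}(\mathcal{H})$ for the bounded operators, $\mathcal{K}(\mathcal{H})$ for the closed two-sided ideal of compact operators, and $\pi\colon\mathcal{B}(\mathcal{H})\to\mathcal{B}(\mathcal{H})/\mathcal{K}(\mathcal{H})=:\mathcal{C}(\mathcal{H})$ for the canonical projection onto the Calkin algebra, which is a unital Banach ($C^{*}$-) algebra with quotient norm $\|\pi(T)\|=\operatorname{dist}\bigl(T,\mathcal{K}(\mathcal{H})\bigr)$. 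First I would recall Atkinson's theorem: for $\lambda\in\mathbb{C}$ the operator $\lambda I-T$ is Fredholm if and only if $\pi(\lambda I-T)=\lambda\pi(I)-\pi(T)$ is invertible in $\mathcal{C}(\mathcal{H})$. Consequently the essential spectrum $\sigma_{ess}(T):=\{\lambda:\lambda I-T\text{ is not Fredholm}\}$ equals the ordinary spectrum $\sigma\bigl(\pi(T)\bigr)$ of the image, and the essential spectral radius satisfies $r_{ess}(T)=r\bigl(\pi(T)\bigr)=\lim_{n\to\infty}\|\pi(T)^{n}\|^{1/n}=\lim_{n\to\infty}\operatorname{dist}\bigl(T^{n},\mathcal{K}(\mathcal{H})\bigr)^{1/n}$, the limit existing by submultiplicativity of the quotient norm together with Gelfand's formula.

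Next, suppose $T-S\in\mathcal{K}(\mathcal{H})$. Then by the very definition of the quotient $\pi(T)=\pi(S)$ in $\mathcal{C}(\mathcal{H})$, whence $\sigma\bigl(\pi(T)\bigr)=\sigma\bigl(\pi(S)\bigr)$ and $r\bigl(\pi(T)\bigr)=r\bigl(\pi(S)\bigr)$. Translating back through the previous paragraph gives $\sigma_{ess}(T)=\sigma_{ess}(S)$ and, in particular, $r_{ess}(T)=r_{ess}(S)$, which is exactly the assertion of Theorem~\ref{thm3.2}. If one prefers to avoid the Calkin algebra, the same conclusion follows from Fredholm theory directly: $\lambda I-T=(\lambda I-S)+(S-T)$ is a compact perturbation of $\lambda I-S$, hence Fredholm whenever $\lambda I-S$ is, so $\sigma_{ess}(T)\subset\sigma_{ess}(S)$, and symmetry yields equality; only the stability of the Fredholm property under compact perturbation is used here, not the invariance of the Fredholm index.

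The point for this paper is that Theorem~\ref{thm3.2} is cheap, and the genuine work sits in the subsequent lemmas of Section~3, not here: one must decompose $\mathcal{A}=\mathcal{A}_{0}+\mathcal{B}$, where $\mathcal{A}_{0}$ generates the conservative semigroup $\mathcal{S}_{0}(t)$ obtained by switching off the boundary dissipation, and then verify that $\mathcal{S}_{\mathcal{A}}(t)-\mathcal{S}_{0}(t)$ is compact on $\mathcal{H}$ for each $t\ge 0$ — typically via a Duhamel (variation-of-parameters) representation in which the boundary feedback enters through low-rank-type operators and the compact Sobolev embeddings on the bounded interval $(0,L)$ supply the compactness. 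Granting that, Theorem~\ref{thm3.2} yields $r_{ess}(\mathcal{S}_{\mathcal{A}}(t))=r_{ess}(\mathcal{S}_{0}(t))=1$, i.e. $\omega_{ess}(\mathcal{S}_{\mathcal{A}}(t))=0$; combining this with Theorem~\ref{thm3.1} and the dissipativity bound $s(\mathcal{A})\le 0$ forces $\omega_{0}(\mathcal{S}_{\mathcal{A}}(t))=0$, so the semigroup cannot decay uniformly exponentially. Thus the only real obstacle to anticipate is the compactness of the semigroup difference, which I would expect to be the technical heart of the section rather than Theorem~\ref{thm3.2} itself.
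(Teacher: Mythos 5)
Your argument is correct: the quotient-norm identity $\|\pi(T)\|=\operatorname{dist}\bigl(T,\mathcal{K}(\mathcal{H})\bigr)$ together with Gelfand's formula gives $r_{ess}(T)=\lim_{n\to\infty}\operatorname{dist}\bigl(T^{n},\mathcal{K}(\mathcal{H})\bigr)^{1/n}$, a quantity that visibly depends only on the coset $T+\mathcal{K}(\mathcal{H})$, and this is exactly the notion of essential (spectral) radius that enters Theorem~\ref{thm3.1}, so the conclusion is immediate. The paper itself gives no proof of this statement --- it is quoted as a classical theorem with only a citation --- so there is no argument to diverge from; your Calkin-algebra route is the standard one, and your closing observation correctly places the genuine work of the section in the compactness of $e^{\mathcal{A}t}-e^{\mathcal{A}_{0}t}$, which is the content of Lemma~\ref{lem3.6}.
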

\begin{proposition}\label{them3.3}
(\cite{eng2000})
For a strongly continuous semigroup $\{e^{\mathcal{A}t}\}_{t\leq0}$, the following assertions are equivalent.\\
$(1)$ $\{e^{\mathcal{A}t}\}_{t\leq0}$ is uniformly exponentially stable.\\
$(2)$ $\{e^{\mathcal{A}t}\}_{t\leq0}$ is uniformly stable.
\end{proposition}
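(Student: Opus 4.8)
The plan is to prove the two implications separately; the content is entirely in $(2)\Rightarrow(1)$, while $(1)\Rightarrow(2)$ is immediate. Indeed, if $\{e^{\mathcal{A}t}\}_{t\geq0}$ is uniformly exponentially stable, there are constants $M\geq1$ and $\epsilon>0$ with $\|e^{\mathcal{A}t}\|\leq Me^{-\epsilon t}$ for all $t\geq0$; letting $t\to\infty$ gives $\|e^{\mathcal{A}t}\|\to0$, which is precisely uniform stability. So the real work is the converse.

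For $(2)\Rightarrow(1)$, assume $\|e^{\mathcal{A}t}\|\to0$ as $t\to\infty$. First I would record that a strongly continuous semigroup is bounded on compact time intervals: for each fixed $x$ the map $t\mapsto\|e^{\mathcal{A}t}x\|$ is continuous, hence bounded on $[0,t_{0}]$, and the Banach--Steinhaus theorem upgrades this to $M_{0}:=\sup_{0\leq s\leq t_{0}}\|e^{\mathcal{A}s}\|<\infty$ for any chosen $t_{0}>0$. Next, using $\|e^{\mathcal{A}t}\|\to0$, pick $t_{0}>0$ with $q:=\|e^{\mathcal{A}t_{0}}\|<1$. For an arbitrary $t\geq0$ write $t=nt_{0}+r$ with $n\in\mathbb{N}\cup\{0\}$ and $0\leq r<t_{0}$; the semigroup law gives $e^{\mathcal{A}t}=\big(e^{\mathcal{A}t_{0}}\big)^{n}e^{\mathcal{A}r}$, so
\begin{align*}
\|e^{\mathcal{A}t}\|\leq q^{n}\,\|e^{\mathcal{A}r}\|\leq M_{0}\,q^{n}.
\end{align*}
Since $q^{n}=e^{n\ln q}=e^{-\epsilon(t-r)}$ with $\epsilon:=-t_{0}^{-1}\ln q>0$ and $0\leq r<t_{0}$, this yields $\|e^{\mathcal{A}t}\|\leq M_{0}e^{\epsilon t_{0}}e^{-\epsilon t}$. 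Taking $M:=M_{0}e^{\epsilon t_{0}}\geq1$ gives $\|e^{\mathcal{A}t}\|\leq Me^{-\epsilon t}$ for all $t\geq0$, i.e. uniform exponential stability, completing the equivalence.

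I do not expect a genuine obstacle here: the only point requiring care is the local uniform boundedness of the semigroup, which is the standard consequence of strong continuity combined with the uniform boundedness principle, and the remainder is the classical observation that a single contraction factor below $1$ forces geometric decay. Since the statement is quoted from \cite{eng2000}, one may alternatively simply refer to it there; the sketch above is included only to keep the section self-contained before it is applied (together with Theorems \ref{thm3.1} and \ref{thm3.2}) to deduce that the present system is not uniformly stable.
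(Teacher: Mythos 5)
Your proposal is correct. The paper itself offers no proof of this proposition---it is stated purely as a citation to \cite{eng2000}---and your argument is exactly the classical one found there: local uniform boundedness via strong continuity plus the uniform boundedness principle, then a single time $t_{0}$ with $\|e^{\mathcal{A}t_{0}}\|<1$ forcing geometric, hence exponential, decay through the semigroup law. Nothing further is needed.
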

\begin{lemma}\label{lem3.4}
Let us fix $\alpha, \beta, \gamma, \rho, \mu$ and the finite interval $E=(a, b)$. Assume that there exists a weak solution to equation
\begin{align}
&\rho V_{tt}-\alpha V_{xx}+\gamma\beta P_{xx}=0,\quad (x,t)\in  (0,L)\times (0,T), \label{3.1}\\
&\mu P_{tt}-\beta P_{xx}+\gamma\beta V_{xx}=0,   \quad (x,t)\in(0,L)\times (0,T),\label{3.2}
\end{align}
If $q(x)=mx+n$, $(m, n\in \mathbb{R})$ and the functions
\begin{align*}
E_{1}(t)&=\int_{E}\left(\rho|V_{t}|^{2}+\alpha_{1}|V_{x}|^{2}+\mu|P_{t}|^{2}+\beta|\gamma V_{x}-P_{x}|^{2}\right)dx,&&t\geq 0,\\
I(x,t)&=\rho|V_{t}(x,t)|^{2}+\alpha_{1}|V_{x}(x,t)|^{2}+\mu|P_{t}(x,t)|^{2}+\beta|(\gamma V_{x}-P_{x})(x,t)|^{2},&& a\geq x\geq b, t\geq 0,
\end{align*}
are integrable in $[a,b]$, then there exists a non-negative constant $M$ satisfying
\begin{align*}
\left|\int^{T}_{0}\left(q(b)I(b,t)-q(a)I(a,t)\right)dt-\int^{T}_{0} m E_{1}(t)dt\right|\leq M\left(E_{1}(T)+E_{1}(0)\right).
\end{align*}
\end{lemma}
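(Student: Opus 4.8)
The plan is to establish the identity by the classical multiplier method, using $q(x)V_x$ and $q(x)P_x$ as multipliers on the two equations, and then to absorb all the boundary-in-time terms into $E_1(T)+E_1(0)$ via energy conservation of the free system. First I would multiply \eqref{3.1} by $q(x)\overline{V}_x$ and integrate over $E\times(0,T)$, and similarly multiply \eqref{3.2} by $q(x)\overline{P}_x$ (taking real parts throughout, since everything is complex-valued). Integrating by parts in $x$ on the elastic terms $-\alpha V_{xx}$ and $-\beta P_{xx}$ produces, after using $q'=m$, a bulk term of the form $\tfrac{m}{2}\int_E(\alpha|V_x|^2+\dots)\,dx$ together with boundary contributions $q(b)(\cdots)|_{x=b}-q(a)(\cdots)|_{x=a}$ evaluated from the spatial flux. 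The cross terms $\gamma\beta P_{xx}\cdot q\overline{V}_x$ and $\gamma\beta V_{xx}\cdot q\overline{P}_x$ combine (again integrating by parts) to reconstruct the mixed energy density $\beta|\gamma V_x-P_x|^2$ in both the bulk and the boundary pieces — this is the step where the precise form of the energy $E_1$ is forced on us, and where writing $\alpha=\alpha_1+\gamma^2\beta$ is needed to match $\alpha_1|V_x|^2+\beta|\gamma V_x-P_x|^2$.

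Next I would handle the time-derivative terms $\rho V_{tt}\cdot q\overline{V}_x$ and $\mu P_{tt}\cdot q\overline{P}_x$. The standard trick is to write $\mathrm{Re}(V_{tt}\overline{V}_x)=\partial_t\,\mathrm{Re}(V_t\overline{V}_x)-\mathrm{Re}(V_t\overline{V}_{xt})$ and then note $\mathrm{Re}(V_t\overline{V}_{xt})=\tfrac12\partial_x|V_t|^2$, so that after integrating in $x$ the second piece also contributes to the bulk $-\tfrac{m}{2}\int_E|V_t|^2\,dx$ term and a boundary term $\tfrac12 q|V_t|^2\big|_a^b$. The first piece, $\partial_t\int_E \rho\,q\,\mathrm{Re}(V_t\overline{V}_x)\,dx$, integrates in $t$ to a difference of the quantity $\int_E\rho q\,\mathrm{Re}(V_t\overline{V}_x)\,dx$ at $t=T$ and $t=0$; by Cauchy–Schwarz and boundedness of $q$ on the finite interval $E$, this is bounded by $C\,E_1(T)+C\,E_1(0)$ — this is the only place the right-hand side of the claimed inequality comes from. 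Assembling all pieces gives exactly
\[
\int_0^T\!\!\big(q(b)I(b,t)-q(a)I(a,t)\big)\,dt-\int_0^T m E_1(t)\,dt
= \Big[\!\int_E \rho q\,\mathrm{Re}(V_t\overline{V}_x)+\mu q\,\mathrm{Re}(P_t\overline{P}_x)\,dx\Big]_{t=T}-\Big[\cdots\Big]_{t=0},
\]
up to sign bookkeeping, and taking absolute values yields the estimate with $M=\sup_E|q|\cdot C$ for an appropriate universal $C$.

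I expect the main obstacle to be purely organizational rather than conceptual: keeping track of the many boundary-in-$x$ terms produced by the several integrations by parts and verifying that the ones at $x=a$ and $x=b$ assemble precisely into $q(a)I(a,t)$ and $q(b)I(b,t)$ with the correct coefficients — in particular that the mixed $\gamma\beta$ terms recombine into $\beta|\gamma V_x-P_x|^2$ at the endpoints, and that the kinetic boundary terms $\tfrac12 q|V_t|^2$ and $\tfrac12 q|P_t|^2$ appear with the right weights. A secondary technical point is justifying the integrations by parts and the manipulation of $V_{tt}$, $P_{tt}$ at the level of weak solutions; here I would argue by density, first proving the identity for smooth (e.g. $\mathcal{D}(\mathcal{A})$-regular) solutions and then passing to the limit, the finiteness of all displayed integrals on $[a,b]$ being exactly the hypothesis of the lemma. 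No compactness, spectral theory, or the specific boundary conditions \eqref{2.5}–\eqref{2.6} are needed — the lemma concerns only the interior equations, which is why it is stated for an arbitrary subinterval $E=(a,b)$.
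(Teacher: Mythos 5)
Your proposal is correct and follows essentially the same route as the paper: the same multipliers $q\overline{V}_x$, $q\overline{P}_x$, the same integrations by parts in $x$ and then in $t$ reducing the left-hand side to $2\,\mathrm{Re}\big[\int_E q(\rho V_t\overline{V}_x+\mu P_t\overline{P}_x)\,dx\big]_{0}^{T}$, and the same Young/Cauchy--Schwarz bound yielding $M(E_1(T)+E_1(0))$. Your explicit identity $\mathrm{Re}(V_{tt}\overline{V}_x)=\partial_t\,\mathrm{Re}(V_t\overline{V}_x)-\tfrac12\partial_x|V_t|^2$ and the density remark for weak solutions only make explicit what the paper compresses into ``integrating by parts and applying the Fubini theorem.''
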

\begin{proof}
Multiplying \eqref{3.1} by $q(x)\overline{V}_{x}$, and integrating over $E$, we have
\begin{align}\label{3.5}
\int^{b}_{a} \rho V_{tt} q(x) \overline{V}_{x}dx-\frac{\alpha_{1}}{2}\int^{b}_{a} q(x)\frac{d}{dx}|V_{x}|^{2}dx-\int^{b}_{a} \gamma\beta q(x)(\gamma V_{xx}-P_{xx})\overline{V}_{x}dx=0.
\end{align}
Multiplying \eqref{3.2} by $q(x)\overline{P}_{x}$, and integrating over $E$, we get
\begin{align}\label{3.6}
\int^{b}_{a} \mu P_{tt} q(x) \overline{P}_{x}dx-\int^{b}_{a}\beta q(x)(\gamma V_{xx}-P_{xx})(-\overline{P}_{x})dx=0.
\end{align}
Adding \eqref{2.5} and \eqref{3.6}, we obtain
\begin{align*}
\int^{b}_{a}\left( \rho V_{tt} q(x) \overline{V}_{x}+ \mu P_{tt} q(x) \overline{P}_{x}\right)dx-\frac{\alpha_{1}}{2}\int^{b}_{a} q(x)\frac{d}{dx}|V_{x}|^{2}dx-\frac{\beta}{2}\int^{b}_{a} q(x)\frac{d}{dx}|\gamma V_{x}-P_{x}|^{2}dx=0.
\end{align*}
Integrating by parts over $E$, and using the fact of $q'(x)=m$, we can show
\begin{equation}\label{3.7}
\begin{aligned}
\int^{b}_{a} \left(\rho V_{tt} q(x) \overline{V}_{x}+ \mu P_{tt} q(x) \overline{P}_{x}\right)dx&-\frac{1}{2}\left[\alpha_{1}q(x)|V_{X}|^{2}+\beta q(x)|\gamma V_{x}-P_{x}|^{2}\right]|^{b}_{a}\\
&=\frac{1}{2}\int^{b}_{a}\alpha_{1}m |V_{x}|^{2}+\beta m|\gamma V_{x}-P_{x}|^{2} dx.
\end{aligned}
\end{equation}
Integrating \eqref{3.7} over $[0,T]$, integrating by parts and applying the Fubini theorem, we obtain
\begin{align*}
\left|\int^{T}_{0}\left(q(b)I(b,t)-q(a)I(a,t)\right)dt-\int^{T}_{0} m E_{1}(t)dt\right|=2 \left|Re \int_{E}q(x)\left[\rho V_{t} \overline{V}+\mu P_{t} \overline{P}_{x}\right]|^{T}_{0} dx\right|.
\end{align*}
By using the Young's inequality and the H\"{o}lder's inequality, we can obtain
\begin{align*}
\bigg|\int^{T}_{0}\left(q(b)I(b,t)-q(a)I(a,t)\right)dt&-\int^{T}_{0} m E_{1}(t)dt\bigg|\\
&\leq2\|q\|_{\infty}\left| Re \int_{E}\left[\rho V_{t} \overline{V}+\mu P_{t} \overline{P}_{x}\right]|^{T}_{0} dx\right|\\
&\leq2\|q\|_{\infty}\int^{b}_{a}\left[\rho^{2}|V_{t}|^{2}+|V_{x}|^{2}+\mu^{2}|P_{t}|^{2}+|P_{x}|^{2}\right]|^{T}_{0}dx\\
&\leq2\|q\|_{\infty}\int^{b}_{a}\left[\rho^{2}|V_{t}|^{2}+(1+2\gamma^{2})|V_{x}|^{2}+\mu^{2}|P_{t}|^{2}+2|P_{x}|^{2}\right]|^{T}_{0}dx\\
&\leq M(E_{1}(T)+E_{1}(0)),
\end{align*}
where $M=2\|q\|_{\infty}\max \{\rho, (1+2\gamma^{2})/\alpha_{1}, \mu, 2/\beta\}$. The conclusion follows immediately.
\end{proof}

Then we consider the undamped piezoelectric beams with tip body
\begin{align}
&\rho \widetilde{{V}}_{tt}-\alpha\widetilde{}{ V}_{xx}+\gamma\beta\widetilde{{P}}_{xx}=0,\quad (x,t)\in  (0,L)\times (0,T), \label{3.8}\\
&\mu \widetilde{{P}}_{tt}-\beta \widetilde{{P}}_{xx}+\gamma\beta \widetilde{{V}}_{xx}=0,   \quad (x,t)\in(0,L)\times (0,T),\label{3.9}
\end{align}
with $x\in(0,L), t\leq0$ and boundary conditions
\begin{equation}\label{3.10}
\begin{aligned}
& \widetilde{{V}}(0,t)=\alpha \widetilde{{V}}_{x}(L,t)-\gamma\beta \widetilde{{P}}_{x}(L,t)+m_{1}\widetilde{u}_{t}(t)=0,&t\in(0,+\infty),\\
& \widetilde{{P}}(0,t)=\beta \widetilde{P}_{x}(L,t)-\gamma\beta \widetilde{{V}}_{x}(L,t)+m_{2}\widetilde{\eta}_{t}(t)=0,& t\in(0,+\infty),
\end{aligned}
\end{equation}
with the same initial condition as in \eqref{2.7}
\begin{align}\label{3.11}
\left(\widetilde{V}(x,0), \widetilde{V}_{t}(x,0), \widetilde{P}(x,0), \widetilde{P}_{t}(x,0),\widetilde{u}(0),\widetilde{\eta} (0)\right)=\left(V_{0},V_{1},P_{0},P_{1},u_{0},\eta_{0}\right),\quad x \in (0,L).
\end{align}
Rewrite  system \eqref{3.8}-\eqref{3.11} as the  Cauchy problem
\begin{equation} \label{3.12}
\left\{
\begin{aligned}
&\frac{d}{dt}\widetilde{U}(t)={\mathcal{A}_{0}}{\widetilde{U}}(t)\\
&{\widetilde{{U}}}(0)={{U}}_{0}
\end{aligned}
\right.
\end{equation}
where $\widetilde{U}=(\widetilde{V}, \widetilde{\Phi}, \widetilde{P}, \widetilde{\Theta}, \widetilde{u}, \widetilde{\eta})$, ${{U}}_{0}=\left(V_{0},V_{1}, P_{0}, P_{1}, u_{0}, \eta_{0}\right)^{T}$ and the operator $\mathcal{A}_{0}:\mathcal{D}(\mathcal{A}_{0})\subset \mathcal{H}\rightarrow \mathcal{H}$ is given by
\begin{align*}
\mathcal{A}_{0}=
\begin{bmatrix}
 0 & \mathrm{I} & 0 & 0 & 0 & 0  \\
  \frac{\alpha}{\rho}\partial_{xx} & 0 & -\frac{\gamma\beta}{\rho}\partial_{xx} & 0 & 0 & 0 \\
  0 & 0 & 0 & \mathrm{I} & 0 & 0 \\
  -\frac{\gamma\beta}{\mu}\partial_{xx} & 0 & \frac{\beta}{\mu}\partial_{xx} & 0 & 0 & 0 \\
  -\frac{\alpha}{m_{1}}\varsigma & 0 & \frac{\gamma\beta}{m_{1}}\varsigma & 0 & 0 & 0 \\
  \frac{\gamma\beta}{m_{2}}\varsigma & 0 & -\frac{\beta}{m_{2}}\varsigma & 0 & 0 & 0
\end{bmatrix}
,
\end{align*}
with $\varsigma \circ \varphi=\varphi_{x}(L)$, $\mathcal{D}(\mathcal{A}_{0})=\mathcal{D}(\mathcal{A})$. And the inner product of $\mathcal{H}$ is given by
\begin{align}\label{3.13}
{Re}\langle  {\mathcal{A}_{0}}\widetilde{{U}},\widetilde{{U}}\rangle_{\mathcal{{H}}}= 0,\quad\forall\widetilde{{U}}\in\mathcal{D}(\mathcal{A}_{0}).
\end{align}

Next, we will use the the classical semigroups theory to prove the well-posedness of system \eqref{3.12}.
\begin{theorem}\label{thm3.5}
Let $U_{0}\in \mathcal{D}(\mathcal{A}_{0})$, there exists a unique solution $\widetilde{{U}}(t)=\mathcal{S}_{\mathcal{A}_{0}}(t)\widetilde{{U}}_{0}$ of \eqref{3.12} such that
\begin{align*}
\widetilde{{U}}\in{C}\left([0,\infty);\mathcal{D}(\mathcal{A}_{0})\right)\cap{C}^{1} \left([0,\infty); \mathcal{H}\right).
\end{align*}
\end{theorem}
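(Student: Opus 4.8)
The plan is to prove Theorem~\ref{thm3.5} by the same route used for Theorem~\ref{thm2.1}: show that $\mathcal{A}_0$ generates a $C_0$-semigroup of contractions on $\mathcal{H}$ via the Lumer--Phillips theorem, and then read off the asserted regularity of $\widetilde{U}(t)=\mathcal{S}_{\mathcal{A}_0}(t)\widetilde{U}_0$ from the standard theory of classical solutions of the abstract Cauchy problem \eqref{3.12}. Since the only difference between $\mathcal{A}$ and $\mathcal{A}_0$ is that the last two rows of $\mathcal{A}_0$ carry $0$ in place of the damping entries $-\xi_i/m_i$, the argument is essentially a repetition of Section~2, and the identity \eqref{3.13} plays the role of Lemma~\ref{lem2.1}.

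First I would record dissipativity: for $\widetilde{U}\in\mathcal{D}(\mathcal{A}_0)=\mathcal{D}(\mathcal{A})$, integrating by parts in the $\mathcal{H}$-inner product gives $\mathrm{Re}\langle\mathcal{A}_0\widetilde{U},\widetilde{U}\rangle_{\mathcal{H}}=0$, which is \eqref{3.13}; in particular $\mathcal{A}_0$ (and also $-\mathcal{A}_0$) is dissipative. Next I would show $0\in\varrho(\mathcal{A}_0)$. Given $F=(f_1,\dots,f_6)\in\mathcal{H}$, the equation $\mathcal{A}_0\widetilde{U}=F$ reduces, exactly as in the proof of Lemma~\ref{lem2.2}, to $\Phi=f_1$, $\Theta=f_3$, $u=f_1(L)$, $\eta=f_3(L)$, together with the elliptic system \eqref{2.14} for $(V,P)$ in which the boundary data $m_1f_5+\xi_1f_1(L)$ and $m_2f_6+\xi_2f_3(L)$ are now replaced by $m_1f_5$ and $m_2f_6$ (the feedback gains no longer enter). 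The sesquilinear form $\mathcal{G}$ on $[H^1_*(0,L)\times H^1_*(0,L)]^2$ is unchanged; it is continuous and, because $\alpha=\alpha_1+\gamma^2\beta$, satisfies $\mathcal{G}((w,s),(w,s))=\int_0^L(\alpha_1|w_x|^2+\beta|\gamma w_x-s_x|^2)\,dx$, which dominates $\|(w,s)\|_{[H^1_*]^2}^2$ by Poincar\'{e}'s inequality since $w(0)=s(0)=0$; the associated right-hand side still defines a continuous linear functional on $H^1_*(0,L)\times H^1_*(0,L)$. Hence Lax--Milgram yields a unique weak solution $(V,P)$, elliptic regularity upgrades it to $V,P\in H^2(0,L)$ satisfying the boundary identities in \eqref{2.14} in the trace sense, and the compatibility relations $\Phi(L)=u$, $\Theta(L)=\eta$ hold by construction, so $\widetilde{U}\in\mathcal{D}(\mathcal{A}_0)$ with $\|\widetilde{U}\|_{\mathcal{H}}\le C\|F\|_{\mathcal{H}}$.

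With dissipativity and $0\in\varrho(\mathcal{A}_0)$ in hand, the Lumer--Phillips theorem shows that $\mathcal{A}_0$ generates a $C_0$-semigroup $\{\mathcal{S}_{\mathcal{A}_0}(t)\}_{t\ge0}$ of contractions on $\mathcal{H}$ (indeed a group of isometries, since $-\mathcal{A}_0$ is dissipative as well), and for $U_0\in\mathcal{D}(\mathcal{A}_0)$ this provides the unique solution $\widetilde{U}(t)=\mathcal{S}_{\mathcal{A}_0}(t)\widetilde{U}_0\in C([0,\infty);\mathcal{D}(\mathcal{A}_0))\cap C^1([0,\infty);\mathcal{H})$, as claimed. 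The only step that is not purely mechanical is the coercivity of $\mathcal{G}$: everything hinges on rewriting the quadratic form through the relation $\alpha=\alpha_1+\gamma^2\beta$ so that it becomes a sum of two squares controlling the $H^1$-seminorms of $V$ and of $\gamma V-P$, after which Poincar\'{e}'s inequality closes the estimate; I expect this to be the main (and essentially only) obstacle, the remainder being a verbatim adaptation of the well-posedness argument of Section~2.
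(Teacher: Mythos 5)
Your proposal is correct and follows essentially the same route as the paper: observe that $\mathcal{A}_0$ is conservative (hence dissipative) via \eqref{3.13}, prove $0\in\varrho(\mathcal{A}_0)$ by rerunning the Lax--Milgram argument of Lemma~\ref{lem2.2} with the $\xi_i$-terms dropped from the boundary data, and conclude generation of a contraction semigroup (the paper invokes this as the ``Lions theorem,'' but means the same Lumer--Phillips-type conclusion). Your explicit verification of the coercivity of $\mathcal{G}$ via $\alpha=\alpha_1+\gamma^2\beta$ supplies a detail the paper leaves implicit, but the argument is the same.
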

\begin{proof}
From \eqref{3.12}, the operator ${\mathcal{A}_{0}}$ is skew-hermitian, conservative, closed and densely definite on  $\mathcal{D}(\mathcal{A}_{0})$. Indeed, by using the method similar to Lemma \ref{lem2.2}, we can straightforwardly prove that $0\in\rho(\mathcal{A}_{0})$. Thanks to the Lions theorem, we conclude that the operator $\mathcal{A}_{0}$ is the infinitesimal generator of $C_{0}$-semigroup of contractions  $\{e^{\mathcal{A}_{0}}\}_{t\leq0}$. Then, the conclusion follows immediately.
\end{proof}

\begin{remark}
From \eqref{3.13} and Theorem \ref{thm3.5}, we straightforward see that $\{e^{\mathcal{A}_{0}t}\}_{t\leq0}$ is a $C_{0}$-group unitary by Stone theorem. And the $C_{0}$-group unitary $\{e^{\mathcal{A}_{0}t}\}_{t\leq0}$ has essential spectrum radius 1. The detailed proof content can be found in reference \cite{eng2000,var2020}.
\end{remark}

\begin{lemma}\label{lem3.6}
The set $\{e^{\mathcal{A}t}-e^{\mathcal{A}_{0}t}\}_{t\leq0}$ form a $C_{0}$-semigroup of compact operators.
\end{lemma}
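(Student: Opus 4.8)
The plan is to recognize $\mathcal{A}$ as a \emph{finite-rank} (hence compact) bounded perturbation of $\mathcal{A}_{0}$ and then to read the assertion off the Duhamel formula. Comparing the matrix representations of $\mathcal{A}$ and $\mathcal{A}_{0}$, the two operators agree except in the last two rows, so
\begin{align*}
B:=\mathcal{A}-\mathcal{A}_{0}:\ \left(V,\Phi,P,\Theta,u,\eta\right)\longmapsto\left(0,0,0,0,-\tfrac{\xi_{1}}{m_{1}}u,-\tfrac{\xi_{2}}{m_{2}}\eta\right).
\end{align*}
Since $\mathcal{D}(\mathcal{A})=\mathcal{D}(\mathcal{A}_{0})$ (recall \eqref{2.10}, \eqref{3.12}) and $B\in\mathcal{L}(\mathcal{H})$ with $\mathrm{rank}\,B\leq 2$, $B$ is compact and $\mathcal{A}=\mathcal{A}_{0}+B$ is a bounded perturbation of $\mathcal{A}_{0}$. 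By the bounded perturbation theorem \cite{eng2000}, for every $x\in\mathcal{H}$ and $t\geq 0$,
\begin{align*}
e^{\mathcal{A}t}x=e^{\mathcal{A}_{0}t}x+\int_{0}^{t}e^{\mathcal{A}_{0}(t-s)}\,B\,e^{\mathcal{A}s}x\,ds ,
\end{align*}
so that $K(t):=e^{\mathcal{A}t}-e^{\mathcal{A}_{0}t}=\int_{0}^{t}e^{\mathcal{A}_{0}(t-s)}\,B\,e^{\mathcal{A}s}\,ds$, with $K(0)=0$.

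Next I would promote the pointwise compactness of the integrand to compactness of $K(t)$. Writing $Bx=\sum_{j=1}^{2}\langle x,g_{j}\rangle_{\mathcal{H}}\,h_{j}$ for suitable vectors $g_{j},h_{j}\in\mathcal{H}$ supported on the $\mathbb{C}\times\mathbb{C}$ component (the constants $\xi_{i},m_{i}$ being absorbed into $g_{j},h_{j}$), one gets
\begin{align*}
e^{\mathcal{A}_{0}(t-s)}\,B\,e^{\mathcal{A}s}x=\sum_{j=1}^{2}\big\langle x,(e^{\mathcal{A}s})^{*}g_{j}\big\rangle_{\mathcal{H}}\;e^{\mathcal{A}_{0}(t-s)}h_{j}.
\end{align*}
Because $\{(e^{\mathcal{A}s})^{*}\}_{s\geq0}$ and $\{e^{\mathcal{A}_{0}r}\}_{r\geq0}$ are strongly continuous on the Hilbert space $\mathcal{H}$, the maps $s\mapsto(e^{\mathcal{A}s})^{*}g_{j}$ and $s\mapsto e^{\mathcal{A}_{0}(t-s)}h_{j}$ are continuous, hence bounded, from $[0,t]$ into $\mathcal{H}$; consequently the rank-$\leq 2$ operator $s\mapsto e^{\mathcal{A}_{0}(t-s)}B\,e^{\mathcal{A}s}$ is continuous from $[0,t]$ into $\big(\mathcal{L}(\mathcal{H}),\|\cdot\|\big)$. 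Therefore the Bochner integral defining $K(t)$ is the operator-norm limit of its Riemann sums, each a finite-rank operator; since the ideal of compact operators is norm-closed in $\mathcal{L}(\mathcal{H})$, $K(t)$ is compact for every $t\geq 0$. (The phrase ``$C_{0}$-semigroup of compact operators'' is used here only to record that each $e^{\mathcal{A}t}-e^{\mathcal{A}_{0}t}$ is compact, which is precisely what is needed to apply Weyl's theorem, Theorem \ref{thm3.2}, and conclude $\omega_{ess}(e^{\mathcal{A}t})=\omega_{ess}(e^{\mathcal{A}_{0}t})$ in the next step.)

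The only delicate point — and where I would spend care — is the second paragraph: compactness is \emph{not} preserved under strong limits, only under operator-norm limits, so it is essential to check that $s\mapsto e^{\mathcal{A}_{0}(t-s)}B\,e^{\mathcal{A}s}$ is norm-continuous and not merely strongly continuous. This is exactly where the explicit finite-rank structure of $B$ (it acts only on the two scalar slots $u,\eta$) does the work: for a genuinely infinite-dimensional compact perturbation one would first have to approximate $B$ in operator norm by finite-rank operators and pass to a limit, whereas here no such approximation is required. Everything else — the Duhamel identity, boundedness of the two semigroups (Theorems \ref{thm2.1} and \ref{thm3.5}), and the norm-closedness of the compacts — is standard.
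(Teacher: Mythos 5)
Your proof is correct, and it takes a genuinely different route from the paper's. The paper argues at the PDE level: for a bounded sequence of data $U_{0}^{n}$ it writes down the system \eqref{3.15}--\eqref{3.18} satisfied by the difference $\widehat{U}^{n}=(e^{\mathcal{A}t}-e^{\mathcal{A}_{0}t})U_{0}^{n}$ (zero initial data, boundary feedback driven by $u^{n},\eta^{n}$), expresses $\|\widehat{U}^{n}\|_{\mathcal{H}}^{2}$ through the energy identity as boundary integrals of $u^{n}\overline{\widehat{u}^{n}}$ and $\eta^{n}\overline{\widehat{\eta}^{n}}$, uses the multiplier identity of Lemma \ref{lem3.4} with $q(x)=x/L$ to bound the boundary traces in $L^{2}(0,T)$, upgrades to $H^{1}(0,T)$ bounds via the boundary ODEs, and invokes Rellich--Kondrachov to extract strongly convergent subsequences. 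You instead observe that $\mathcal{D}(\mathcal{A})=\mathcal{D}(\mathcal{A}_{0})$ and that the generators differ only by the bounded rank-$2$ operator $B$ acting on the $(u,\eta)$ slots, then read compactness off the Duhamel formula; your identification of $B$ is right (the matrices of $\mathcal{A}$ and $\mathcal{A}_{0}$ differ exactly in the $(5,5)$ and $(6,6)$ entries, and $B$ is bounded for the $m_{1}|u|^{2}+m_{2}|\eta|^{2}$ part of the norm precisely because $m_{1},m_{2}>0$), and you correctly isolate the one delicate point, namely that the integrand must be \emph{norm}-continuous before Riemann sums can be used, which your explicit rank-$2$ factorization and the strong continuity of $(e^{\mathcal{A}s})^{*}$ on the Hilbert space $\mathcal{H}$ deliver. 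What each approach buys: yours is shorter, self-contained, and in fact tighter at the finish line --- the paper's proof ends by showing only that the \emph{norms} $\|(e^{\mathcal{A}t}-e^{\mathcal{A}_{0}t})U_{0}^{n_{k}}\|_{\mathcal{H}}$ converge to a constant $\mathcal{K}$, whereas compactness requires strong convergence of the vectors themselves, a step your norm-limit-of-finite-rank argument supplies cleanly. The paper's route, on the other hand, is the one that survives when the damping enters through genuinely unbounded boundary feedback (e.g.\ the tip masses removed), where the difference of the generators is no longer a bounded operator; here the presence of $m_{1},m_{2}>0$ is exactly the structural feature your argument exploits. One cosmetic remark: as you note, $\{e^{\mathcal{A}t}-e^{\mathcal{A}_{0}t}\}$ is not literally a semigroup (its value at $t=0$ is $0$, not $I$); what is used downstream, and what both proofs actually establish, is compactness of each individual difference, which is all Weyl's theorem (Theorem \ref{thm3.2}) needs.
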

\begin{proof}
For any bounded ${{U}}^{n}_{0}=\left(V^{n}_{0},V^{n}_{1}, P^{n}_{0}, P^{n}_{1}, u^{n}_{0}, \eta^{n}_{0}\right)\in \mathcal{H}$, by using theorem \ref{thm2.1}, we have that
\begin{align*}
U^{n}=e^{\mathcal{A}t}U^{n}_{0}=\left(V^{n},V_{t}^{n}, P^{n}, P_{t}^{n}, u^{n}, \eta^{n}\right)\in \mathcal{H}
\end{align*}
are bounded solutions of \eqref{2.3}-\eqref{2.7}, and thanks to theorem \ref{thm3.5}, we have that
\begin{align*}
\widetilde{U}^{n}=e^{\mathcal{A}_{0}t}\widetilde{U}^{n}_{0}=\left(\widetilde{V}^{n},\widetilde{V_{t}}^{n}, \widetilde{P}^{n}, \widetilde{P_{t}}^{n}, \widetilde{u}^{n}, \widetilde{\eta}^{n}\right)\in \mathcal{H}
\end{align*}
are bounded solutions of \eqref{3.8}-\eqref{3.11}. Indeed, we define
\begin{align*}
\widehat{V}^{n}_{x}=V^{n}_{x}-\widetilde{V}^{n}_{x},\quad \widehat{P}^{n}_{x}=P^{n}_{x}-\widetilde{P}^{n}_{x},&\quad\widehat{V}^{n}_{t}=V^{n}_{t}-\widetilde{V}^{n}_{t},\quad\widehat{P}^{n}_{t}=P^{n}_{t}-\widetilde{P}^{n}_{t},\\
\widehat{u}^{n}=u^{n}-\widetilde{u}^{n},&\quad\widehat{\eta}^{n}={\eta}^{n}-\widetilde{{\eta}}^{n}
\end{align*}
satisfy the system
\begin{align}
&\rho \widehat{V}^{n}_{tt}-\alpha \widehat{V}^{n}_{xx}+\gamma\beta \widehat{P}^{n}_{xx}=0,&&(x,t)\in  (0,L)\times (0,T), \label{3.15}\\
&\mu \widehat{P}^{n}_{tt}-\beta \widehat{P}^{n}_{xx}+\gamma\beta \widehat{V}^{n}_{xx}=0,  && (x,t)\in(0,L)\times (0,T),\label{3.16}
\end{align}
with the boundary conditions
\begin{equation}\label{3.17}
\begin{aligned}
&\widehat{V}^{n}(0,t)=\widehat{P}^{n}(0,t)=0,&& t\in (0,T),\\
&\alpha \widehat{V}^{n}_{x}(L,t)-\gamma\beta \widehat{P}^{n}_{x}(L,t)+\xi_{1} u^{n}(t)+m_{1}\widehat{u}^{n}_{t}(t)=0,&&t\in(0,+\infty), \\
&\beta \widehat{P}^{n}_{x}(L,t)-\gamma\beta \widehat{V}^{n}_{x}(L,t)+\xi_{2} \eta^{n}(t)+m_{2}\widehat{\eta}^{n}_{t}(t)=0,&& t\in(0,+\infty),
\end{aligned}
\end{equation}
and the initial conditions
\begin{align}\label{3.18}
\left(\widehat{V}^{n}(x,0), \widehat{V}^{n}_{t}(x,0), \widehat{P}^{n}(x,0), \widehat{P}^{n}_{t}(x,0), \widehat{u}(0), \widehat{\eta}(0)\right)=\left(0,0,0,0,0,0\right),\quad x \in (0,L).
\end{align}
Then we can show that
\begin{align*}
\left\|\left(e^{\mathcal{A}t}-e^{\mathcal{A}_{0}t}\right)U^{n}_{0}\right\|_{\mathcal{H}}=\int^{L}_{0}\left(\rho|\widehat{V}_{t}|^{2}+\alpha_{1}|\widehat{V}_{x}|^{2}+\mu|\widehat{P}_{t}|^{2}+\beta|\gamma \widehat{V}_{x}-\widehat{P}_{x}|^{2}\right)dx+\frac{m_{1}}{2}|\widehat{u}|^{2}+\frac{m_2}{2}|\widehat{\eta}|^{2},
\end{align*}
and the energy associated with problem \eqref{3.15}-\eqref{3.18} can be defined by
\begin{align*}
\widehat{E}_{n}(t)=\frac{1}{2}\left\|(\widehat{V},\widehat{V}_{t},\widehat{P},\widehat{P}_{t},\widehat{u},\widehat{\eta})\right\|^{2}_{\mathcal{H}}.
\end{align*}

Multiplying \eqref{3.15}, \eqref{3.16} by $\overline{\widehat{V}^{n}_{t}}$, $\overline{\widehat{P}^{n}_{t}}$ respectively, integrating by parts over $(0,L)$, and using the boundary conditions \eqref{3.17}, we can obtain
\begin{align*}
\frac{d}{dt}\bigg[\int^{L}_{0}\left(\rho|\widehat{V}_{t}|^{2}+\alpha_{1}|\widehat{V}_{x}|^{2}+\mu|\widehat{P}_{t}|^{2}+\beta|\gamma \widehat{V}_{x}-\widehat{P}_{x}|^{2}\right)dx+\frac{m_{1}}{2}|\widehat{u}|^{2}+\frac{m_2}{2}|\widehat{\eta}|^{2}\bigg]\\
=-2\xi_{1} Re u^{n}(t)\overline{\widehat{u}^{n}(t)}-2\xi_{2} Re \eta^{n}(t)\overline{\widehat{\eta}^{n}(t)}.
\end{align*}
Integrating the above equation over $[0,T]$ and use the initial conditions \eqref{3.18}, we have
\begin{align*}
\int^{L}_{0}\left(\rho|\widehat{V}_{t}|^{2}+\alpha_{1}|\widehat{V}_{x}|^{2}+\mu|\widehat{P}_{t}|^{2}+\beta|\gamma \widehat{V}_{x}-\widehat{P}_{x}|^{2}\right)dx+\frac{m_{1}}{2}|\widehat{u}|^{2}+\frac{m_2}{2}|\widehat{\eta}|^{2}\\
=-2\xi_{1} Re\int^{T}_{0} u^{n}(t)\overline{\widehat{u}^{n}(t)}dt-2\xi_{2} Re\int^{T}_{0}  \eta^{n}(t)\overline{\widehat{\eta}^{n}(t)}dt.
\end{align*}
Since the operator $\mathcal{A}$ is dissipative and $\mathcal{A}_{0}$ is conservative, we know that energy $\widehat{E}_{n}(t) $ is non-increasing. From \eqref{2.9}, energy $E(T)$ is also non-increasing. That is, $\widehat{E}_{n}(t)\leq \widehat{E}_{n}(0), E(t)\leq E(0), \forall t \geq 0$.

Let $b=L, a=0$ and $ q(x)=x/L$ in Lemma \ref{lem2.2}, we have
\begin{align*}
\int^{L}_{0}\left(\rho|{V}_{t}(L,t)|^{2}+\alpha_{1}|{V}_{x}(L,t)|^{2}+\mu|t{P}_{t}(L,t)|^{2}+\beta|(\gamma {V}_{x}-{P}_{x})(L,t)|^{2}\right)dx
\leq\left(\frac{T}{L}+2M\right)E_{1}(0),
\end{align*}
and
\begin{align*}
\int^{L}_{0}\left(\rho|\widehat{V}_{t}(L,t)|^{2}+\alpha_{1}|\widehat{V}_{x}(L,t)|^{2}+\mu|\widehat{P}_{t}(L,t)|^{2}+\beta|(\gamma \widehat{V}_{x}-\widehat{P}_{x})(L,t)|^{2}\right)dx
\leq\left(\frac{T}{L}+2M\right)E_{1}(0).
\end{align*}
It implies that $V^{n}_{x}(L,t), P^{n}_{x}(L,t)$, $\widehat{V}^{n}_{x}(L,t), \widehat{P}^{n}_{x}(L,t)$, $u^{n}(t), \eta^{n}(t)$, $\widehat{u}^{n}(t), \widehat{\eta}^{n}(t)$ are bounded in $L^{2}(0,T)$. Meanwhile, using the boundary conditions \eqref{2.5}, \eqref{2.6} and \eqref{3.17}, we obtain that $u^{n}_{t}(t), \eta^{n}_{t}(t)$ , $\widehat{u}^{n}_{t}(t), \widehat{\eta}^{n}_{t}(t)$ are also bounded in $L^{2}(0,T)$. We conclude that $u^{n}(t), \eta^{n}(t)$, $\widehat{u}^{n}(t), \widehat{\eta}^{n}(t)$ are bounded in $H^{1}(0,L)$. Thanks to the Rellich-Kondrachov theorem, we obtain that there exist strongly convergent subsequences $u^{nk}(t), \eta^{nk}(t)$, $\widehat{u}^{nk}(t), \widehat{\eta}^{nk}(t)$ in $L^{2}(0,L)$. Then, by using Young's inequality, we have
\begin{align*}
\left\|\left(e^{\mathcal{A}t}-e^{\mathcal{A}_{0}t}\right)U^{n}_{0}\right\|_{\mathcal{H}}\leq &\xi_{1}\left(\int^{T}_{0} \left|u^{nk}(t)\right|^{2}dt+\int^{T}_{0} \left|\widehat{u}^{nk}(t)\right|^{2}dt\right)\\
&+\xi_{2}\left(\int^{T}_{0} \left|\eta^{nk}(t)\right|^{2}dt+\int^{T}_{0} \left|\widehat{\eta}^{nk}(t)\right|^{2}dt\right)\\
&\rightarrow \mathcal{K}, \quad{k\rightarrow\infty},
\end{align*}
where $\mathcal{K}$ is a finite positive constant.
\end{proof}
\begin{theorem}
The $C_0$ semigroup $\{e^{\mathcal{A}t}\}_{t\leq0}$ is not uniformly stable.
\end{theorem}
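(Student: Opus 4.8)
The plan is to combine the three preparatory ingredients that have just been assembled. By Theorem~\ref{thm3.5} and the Remark that follows it, the conservative semigroup $\{e^{\mathcal{A}_{0}t}\}_{t\geq 0}$ is a unitary $C_{0}$-group, so its essential growth bound (equivalently its essential spectral radius) is $1$, i.e. $\omega_{ess}(e^{\mathcal{A}_{0}t})=0$. By Lemma~\ref{lem3.6}, the difference $e^{\mathcal{A}t}-e^{\mathcal{A}_{0}t}$ is a compact operator for each $t$, so Weyl's Theorem (Theorem~\ref{thm3.2}) gives that $e^{\mathcal{A}t}$ and $e^{\mathcal{A}_{0}t}$ have the same essential spectral radius, hence $\omega_{ess}(e^{\mathcal{A}t})=\omega_{ess}(e^{\mathcal{A}_{0}t})=0$. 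Therefore, by the decomposition of the growth bound in Theorem~\ref{thm3.1},
\begin{align*}
\omega_{0}(e^{\mathcal{A}t})=\max\{\omega_{ess}(e^{\mathcal{A}t}),\,s(\mathcal{A})\}\geq \omega_{ess}(e^{\mathcal{A}t})=0.
\end{align*}
On the other hand, by Lemma~\ref{lem2.1} the semigroup $\{e^{\mathcal{A}t}\}_{t\geq 0}$ is a contraction semigroup, so $\omega_{0}(e^{\mathcal{A}t})\leq 0$; combining the two inequalities yields $\omega_{0}(e^{\mathcal{A}t})=0$.

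Next I would translate the vanishing of the growth bound into the failure of uniform stability. A $C_{0}$-semigroup is uniformly exponentially stable precisely when its growth bound is strictly negative; since $\omega_{0}(e^{\mathcal{A}t})=0$, the semigroup $\{e^{\mathcal{A}t}\}_{t\geq 0}$ is not uniformly exponentially stable. Finally, Proposition~\ref{them3.3} asserts that for a strongly continuous semigroup uniform stability and uniform exponential stability are equivalent, so $\{e^{\mathcal{A}t}\}_{t\geq 0}$ is not uniformly stable either. This is exactly the assertion of the theorem.

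The genuinely substantive work has already been done in Lemma~\ref{lem3.6} (the compactness of the difference of the two semigroups), and the spectral bookkeeping described above is then essentially formal. Within the proof itself the only point that requires a little care is making sure the hypotheses of Theorem~\ref{thm3.1} are met — namely that $\{e^{\mathcal{A}t}\}_{t\geq 0}$ is a contraction $C_{0}$-semigroup on a Hilbert space, which is Theorem~\ref{thm2.1} together with Lemma~\ref{lem2.1} — and that the unitary group $\{e^{\mathcal{A}_{0}t}\}$ indeed has essential spectral radius $1$ rather than something smaller, which follows from the fact that a unitary group has all of its spectrum on the unit circle and no eigenvalue of finite multiplicity can be isolated from the rest of the spectrum here; this is the content of the Remark. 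I expect no serious obstacle beyond quoting these facts in the right order.
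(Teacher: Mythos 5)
Your proposal is correct and follows essentially the same route as the paper: essential spectral radius $1$ for the conservative group, Weyl's theorem via the compactness of $e^{\mathcal{A}t}-e^{\mathcal{A}_{0}t}$ from Lemma~\ref{lem3.6}, the growth-bound decomposition of Theorem~\ref{thm3.1}, and Proposition~\ref{them3.3} to pass from failure of uniform exponential stability to failure of uniform stability. The only cosmetic difference is that you bound $\omega_{0}\leq 0$ by contractivity where the paper uses $s(\mathcal{A})\leq 0$ inside the max formula; both yield $\omega_{0}=0$.
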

\begin{proof}
We have know $r_e(e^{\mathcal{A}_{0}t})=1$, and the difference $\{e^{\mathcal{A}t}-e^{\mathcal{A}_{0}t}\}$ is a compact operator. Then, thanks to Theorem \ref{thm3.2}, we obtain
\begin{align*}
r_e(e^{\mathcal{A}t})=r_e(e^{\mathcal{A}_{0}t})=1.
\end{align*}
Using the relationship between $r_e(e^{\mathcal{A}t})$ and $\omega_e(\mathcal{A})$, we obtain
\begin{align*}
\omega_e(\mathcal{A})=0.
\end{align*}
On the other hand, since the operator $\mathcal{A}$ is dissipative, we have $s(\mathcal{A})\leq0$. By using Lemma \ref{thm3.1}, we get
\begin{align*}
\omega_{0}(\mathcal{S}(t))=\max\{\omega_{ess}(\mathcal{S}(t)),s(\mathcal{A})\}=0.
\end{align*}
From the definition of the growth bound in \cite{eng2000}, we have that $\{e^{\mathcal{A}t}\}_{t\leq0}$ is uniformly exponentially stable if and only if $\omega_{0}<0$. So we have $\{e^{\mathcal{A}t}\}_{t\leq0}$ is not uniformly exponentially stable. And it is known that $\{e^{\mathcal{A}t}\}_{t\leq0}$ is a $C_0$-semigroup, which means that it must also be a strongly continuous semigroup. By using Proposition \ref{them3.3}, we conclude that $\{e^{\mathcal{A}t}\}_{t\leq0}$ is not uniformly stable.
\end{proof}
\section{Polynomial stability}
In the previous section, we have shown that the piezoelectric beam system \eqref{2.3}-\eqref{2.8} is  not uniformly stability. In this section, we will state and prove the polynomial stability of our system in this section. It will be achieved by using the following result of Borichev
and Tomilov and two e lemmas.

\begin{theorem}\label{thm4.1}(\cite{bor2010})
Assume that ${\mathcal{S}(t)}_{t\geq0}$ be a bounded $C_{0}-$ semigroup on Hilbert space $H$.  Let $\mathcal{A}$ be the infinitesimal generator of ${\mathcal{S}(t)}_{t\geq0}$ such that $i\mathbb{R}\subset \rho (\mathcal{A})$.  Then, for any $k>0$,  the following conditions are equivalent: \\
$(1)$ $\left\|\left(i\lambda I -\mathcal{A}\right)^{-1}\right\|_{\mathcal{L}(\mathcal{H})}=o\left(|\lambda |^{K}\right),\lambda\rightarrow \infty$;\\
$(2)$ $\left\|\mathcal{S}(t)\mathcal{A}^{-1}\right\|_{\mathcal{L}(\mathcal{H})}=o\left(t^{-\frac{1}{k}}\right),t\rightarrow \infty$.
\end{theorem}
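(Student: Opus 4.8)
Theorem~\ref{thm4.1} is precisely the Hilbert-space characterization of Borichev and Tomilov \cite{bor2010}, and the present paper uses it as a quoted tool rather than proving it; if one wished to establish it, the natural route is the following.

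\textbf{The easy implication $(2)\Rightarrow(1)$.} Assume $\|\mathcal S(t)\mathcal A^{-1}\|_{\mathcal L(\mathcal H)}=o(t^{-1/k})$. For $\mathrm{Re}\,\lambda>0$ one has $R(\lambda,\mathcal A)=\int_{0}^{\infty}e^{-\lambda t}\mathcal S(t)\,dt$, hence $R(\lambda,\mathcal A)\mathcal A^{-1}=\int_{0}^{\infty}e^{-\lambda t}\mathcal S(t)\mathcal A^{-1}\,dt$. When $1/k\le 1$ this integral is not absolutely convergent up to the axis, so one first uses the resolvent identity $\lambda R(\lambda,\mathcal A)\mathcal A^{-1}=\mathcal A^{-1}+R(\lambda,\mathcal A)$ (equivalently, an integration by parts in $t$) to shift a factor of $t$ onto the already-known decay. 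Letting $\mathrm{Re}\,\lambda\downarrow 0$ and estimating the boundary integral then gives $\|R(is,\mathcal A)\mathcal A^{-1}\|=o(|s|^{k-1})$ for large $|s|$, and, reinserting it into the same identity, $\|R(is,\mathcal A)\|=o(|s|^{k})$.

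\textbf{The hard implication $(1)\Rightarrow(2)$: a quantitative Gearhart--Pr\"uss theorem.} \emph{Reduction.} By the moment inequality $\|\mathcal A^{-1}z\|\le c\,\|\mathcal A^{-m}z\|^{1/m}\|z\|^{1-1/m}$ applied to $z=\mathcal S(t)x$ (using $\|\mathcal S(t)\|\le M$), one gets $\|\mathcal S(t)\mathcal A^{-1}\|\le c'\|\mathcal S(t)\mathcal A^{-m}\|^{1/m}$, so it suffices to prove $\|\mathcal S(t)\mathcal A^{-m}\|=O(t^{-m/k})$ for one large integer $m$. \emph{Resolvent bounds for negative powers.} From $\|R(is,\mathcal A)\|\le C(1+|s|)^{k}$ (the large-$|s|$ hypothesis plus local boundedness, valid since $i\mathbb R\subset\rho(\mathcal A)$) and repeated use of $R(is,\mathcal A)\mathcal A^{-1}=(is)^{-1}(\mathcal A^{-1}+R(is,\mathcal A))$, one shows that $s\mapsto R(is,\mathcal A)\mathcal A^{-m}x$ lies in $L^{2}(\mathbb R;\mathcal H)$ with norm $\lesssim\|x\|$ once $m$ is large, and likewise for $R(is,\mathcal A)\mathcal A^{-m+1}x$. \emph{Plancherel.} Here the Hilbert-space structure enters: the orbit $g(t)=\mathcal S(t)\mathcal A^{-m}x$ is bounded and its Laplace transform is holomorphic on $\mathbb C_{+}$ with boundary value $R(is,\mathcal A)\mathcal A^{-m}x$ on $i\mathbb R$ (no singularities, since $i\mathbb R\subset\rho(\mathcal A)$); the Paley--Wiener/Plancherel theorem for $H^{2}$ of a half-plane with $\mathcal H$-valued data then yields $g\in L^{2}(0,\infty;\mathcal H)$ with $\|g\|_{L^{2}}\lesssim\|x\|$, and the same for $g'=\mathcal S(\cdot)\mathcal A^{-m+1}x$. \emph{From $L^{2}$ to a rate.} Square-integrability of the orbit and of its derivative, combined quantitatively with the polynomial resolvent bound (e.g. by a rescaling-and-contradiction argument, or by a weighted-in-time estimate), upgrades to the pointwise decay $\|\mathcal S(t)\mathcal A^{-m}x\|=O(t^{-m/k})\|x\|$; feeding this through the reduction step gives $(2)$ with exponent $1/k$. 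The little-$o$ statements follow by an analogous argument applied to the bounds $\|R(is,\mathcal A)\|\le\varepsilon(1+|s|)^{k}$ valid for large $|s|$.

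\textbf{Main obstacle.} The delicate part is the hard direction, and within it the passage ``from $L^{2}$ to a rate'': one must convert a purely pointwise-in-$s$ polynomial bound on $R(is,\mathcal A)$ into the \emph{sharp} decay exponent $1/k$. This is exactly where the assumption that $\mathcal H$ is a Hilbert space is irreplaceable --- Plancherel is what transfers $L^{2}$ control losslessly between the imaginary axis and the time half-line --- whereas on a general Banach space only a version with a logarithmic correction holds. One must also keep the constants in the moment inequality and in the interpolation under control so that the exponent is not degraded, and treat the $o$ versus $O$ dichotomy by the approximation argument indicated above.
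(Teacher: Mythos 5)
The paper does not prove this statement: Theorem~\ref{thm4.1} is quoted verbatim (up to the typo $K$ versus $k$, and an $o$ where Borichev--Tomilov's Theorem~2.4 is usually stated with $O$) from \cite{bor2010} and is used purely as a black box, so there is no in-paper argument to compare yours against. You correctly identify this, and your roadmap of the literature proof is essentially faithful to how the result is actually established: the implication $(2)\Rightarrow(1)$ via the Laplace representation of the resolvent together with the identity $R(i s,\mathcal A)\mathcal A^{-1}=(i s)^{-1}\bigl(\mathcal A^{-1}+R(i s,\mathcal A)\bigr)$, and the hard implication via the moment inequality (reducing $\|\mathcal S(t)\mathcal A^{-1}\|=O(t^{-1/k})$ to $\|\mathcal S(t)\mathcal A^{-m}\|=O(t^{-m/k})$ for a large integer $m$) combined with Plancherel for $\mathcal H$-valued $H^2$ functions on a half-plane, which is exactly where the Hilbert-space hypothesis is used and why only a logarithmically worse statement survives in Banach spaces.

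The one point where your sketch stops short of a proof is the step you yourself flag as the obstacle: passing ``from $L^2$ to a rate.'' Knowing $g=\mathcal S(\cdot)\mathcal A^{-m}x$ and $g'$ lie in $L^2(0,\infty;\mathcal H)$ gives, via $\|g(t)\|^2\le 2\|g\|_{L^2(t,\infty)}\|g'\|_{L^2(t,\infty)}$, only that $\|g(t)\|\to 0$, not the exponent $t^{-m/k}$; and a ``rescaling-and-contradiction argument'' is not by itself a mechanism that produces the sharp rate. In \cite{bor2010} this is done quantitatively, by a Plancherel/functional-calculus estimate on the vertical line $\mathrm{Re}\,\lambda=1/t$ in which the polynomial resolvent bound enters with its exact power, yielding $\sup_{t>0}t\,\|\mathcal S(t)\mathcal A^{-k}\|<\infty$ directly. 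Since the paper itself treats the theorem as a citation, leaving that step as a pointer to \cite{bor2010} is acceptable here, but be aware that it is the entire analytic content of the theorem rather than a routine upgrade.
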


The spectral equation is given by
\begin{align}\label{3.3}
i\lambda U-\mathcal{A}U={F}.
\end{align}
Rewriting \eqref{3.3} in term of its components, we have
\begin{equation}\label{3.4}
\left\{
\begin{aligned}
&i\lambda V-\Phi=F_{1} \quad \mathrm{in} \quad {H}^{1}_{*}(0,L),\\
&i\lambda\rho\Phi-{\alpha}V_{xx}+{\gamma\beta}P_{xx}=\rho F_{2} , \quad \mathrm{in} \quad {L}^{2}(0,L),\\
&i\lambda P-\Theta=F_{3} \quad \mathrm{in} \quad {H}^{1}_{*}(0,L),\\
&i\lambda\mu\Theta-\beta P_{xx}+\gamma\beta V_{xx}=\mu F_{4} , \quad \mathrm{in} \quad {L}^{2}(0,L),\\
&i\lambda m_{1}u+{\alpha}V_{x}(L)-{\gamma\beta}P_{x}(L)+{\xi_{1}}u={m_{1}}F_{5} ,\\
&i\lambda m_{2}\eta+{\beta}P_{x}(L)-{\gamma\beta}V_{x}(L)+{\xi_{2}}\eta={m_{2}}F_{6} ,
\end{aligned}
\right.
\end{equation}
where $\lambda\in \mathbb{R}$ , ${F}=\left(F_{1},F_{2},F_{3},F_{4},F_{5},F_{6}\right)\in \mathcal{H}$ and
\begin{align}\label{4.40-}
\Phi(L)=u,\quad \Theta(L)=\eta.
\end{align}
From \eqref{2.11} and \eqref{4.40-}, we have
\begin{align}\label{4.41-}
\xi_{1}|u|^{2}+\xi_{2}|\eta|^{2}=\xi_{1}|\Phi(L)|^{2}+\xi_{2}|\Theta(L)|^{2}\leq C\|{U}\|_{\mathcal{H}}\|{F}\|_{\mathcal{H}}.
\end{align}

For further proof, we introduce the following functionals and notions.
\begin{align*}
I_{V}&=\rho q(L)|\Phi(L)|^{2}+\alpha_{1}q(L)|V_{x}(L,t)|^{2};\\
I_{P}&=\mu q(L)|\Theta(L)|^{2}+\beta q(L)|(\gamma V_{x}-P_{x})(L,t)|^{2};\\
\mathcal{N}^{2}&=\int^{L}_{0}\rho |\Phi|^{2} dx +\int^{L}_{0}\mu|\Theta|^{2} dx +\int^{L}_{0}\alpha_{1}|V_{x}|^{2}dx +\int^{L}_{0}\beta|\gamma V_{x}-P_{x}|^{2}dx.
\end{align*}
\begin{lemma}\label{lem3.3}
Let us consider $F=(f_{1},f_{2},f_{3},f_{4},f_{5},f_{6})\in\mathcal{H}$, $\lambda\in \mathbb{R}$, and ${U}=(V,\Phi,P,\Theta,u,\eta)\in \mathcal{D}(\mathcal{A})$ such that $(i\lambda{U}-\mathcal{A}{U})={F}$. For $q\in C^{2}([0,L])$, $q(0)=0$, we have
\begin{align*}
&I_{V}+I_{P}-\int^{L}_{0}\rho q_{x}|\Phi|^{2} dx -\int^{L}_{0}\mu q_{x}|\Theta|^{2} dx -\int^{L}_{0}\alpha_{1} q_{x}|V_{x}|^{2}dx -\int^{L}_{0}\beta q_{x}|\gamma V_{x}-P_{x}|^{2}dx\\
=&-R_{1}-R_{2},
\end{align*}
where
\begin{align*}
R_{1}=Re\int^{L}_{0}\left(2\mu q f_{4}\overline{P}_{x}+2\mu q \overline{f}_{3,x}\Theta\right) dx\\
R_{2}=Re\int^{L}_{0}\left(2\rho q f_{2}\overline{V}_{x}-2\rho q \Phi\overline{f}_{1,x} \right)dx.
\end{align*}
\end{lemma}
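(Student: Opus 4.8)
The plan is to derive the identity as a multiplier estimate, exactly in the spirit of Lemma~\ref{lem3.4}, but now applied on the full interval $(0,L)$ to the resolvent equations \eqref{3.4} rather than to a time-dependent weak solution. First I would multiply the second equation of \eqref{3.4}, namely $i\lambda\rho\Phi-\alpha V_{xx}+\gamma\beta P_{xx}=\rho F_{2}$, by $q\overline{V}_{x}$ and integrate over $(0,L)$; similarly I would multiply the fourth equation $i\lambda\mu\Theta-\beta P_{xx}+\gamma\beta V_{xx}=\mu F_{4}$ by $q\overline{P}_{x}$ and integrate. Using $i\lambda V=\Phi+F_{1}$ and $i\lambda P=\Theta+F_{3}$ from the first and third equations of \eqref{3.4}, the terms $i\lambda\rho\Phi\,q\overline{V}_{x}$ and $i\lambda\mu\Theta\,q\overline{P}_{x}$ can be rewritten, after an integration by parts in $x$, so that the kinetic-energy densities $\rho|\Phi|^{2}$ and $\mu|\Theta|^{2}$ appear with a factor $q_{x}$, together with boundary contributions at $x=L$ (the contributions at $x=0$ vanish since $q(0)=0$ and $V(0)=P(0)=0$), plus the remainder terms involving $f_{1}$ and $f_{3}$ that I would collect into $R_{2}$ and $R_{1}$ respectively.

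Next I would handle the elliptic part. Recombining $-\alpha V_{xx}+\gamma\beta P_{xx}=-\alpha_{1}V_{xx}-\gamma\beta(\gamma V_{xx}-P_{xx})$ (using $\alpha=\alpha_{1}+\gamma^{2}\beta$) and $-\beta P_{xx}+\gamma\beta V_{xx}=-\beta(\gamma V_{xx}-P_{xx})\cdot(\pm1)$ appropriately, the two stiffness terms, after multiplying by $q\overline{V}_{x}$ and $q\overline{P}_{x}$ and integrating by parts, produce the combination $-\tfrac{1}{2}\int_{0}^{L}q\,\tfrac{d}{dx}\big(\alpha_{1}|V_{x}|^{2}+\beta|\gamma V_{x}-P_{x}|^{2}\big)dx$, whose integration by parts gives the boundary terms $\alpha_{1}q(L)|V_{x}(L)|^{2}+\beta q(L)|(\gamma V_{x}-P_{x})(L)|^{2}$ and the interior terms $\tfrac12\int_{0}^{L}q_{x}\big(\alpha_{1}|V_{x}|^{2}+\beta|\gamma V_{x}-P_{x}|^{2}\big)dx$. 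Taking real parts throughout, collecting the boundary terms at $x=L$ into $I_{V}$ and $I_{P}$ (here $\Phi(L)=i\lambda V(L)-F_{1}(L)$ gets absorbed, up to $F$-terms, using $\Phi(L)=u$), and moving all the $q_{x}$-weighted interior integrals to the left-hand side, yields precisely the claimed identity with $R_{1},R_{2}$ as the two real-part remainder integrals.

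The main obstacle, as usual with these multiplier identities, is the careful bookkeeping of the cross-terms: the piezoelectric coupling $\gamma\beta$ appears in both equations, and one must check that after the two integrations by parts the mixed terms reorganize exactly into the single density $\beta|\gamma V_{x}-P_{x}|^{2}$ rather than leaving a leftover $\int q_{x}\,\mathrm{Re}(V_{x}\overline{P}_{x})$ term. This is where the relation $\alpha=\alpha_{1}+\gamma^{2}\beta$ is essential, and it is worth writing the multiplier for the $P$-equation as $q\overline{(\gamma V_{x}-P_{x})}$-type combinations (equivalently, taking a suitable linear combination of the $V_{x}$- and $P_{x}$-multipliers) so that the stiffness form diagonalizes. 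The second, more minor, point is to confirm that every boundary term at $x=0$ drops because $q(0)=0$ and $V,P\in H^{1}_{*}(0,L)$, and that the only surviving boundary data at $x=L$ are exactly those assembled in $I_{V}+I_{P}$; all other boundary quantities at $L$ (such as those coming from $i\lambda\rho\Phi q\overline V_x$ after integration by parts) must be shown either to cancel in pairs or to be packaged into $R_{1},R_{2}$. Once the algebra is arranged this way, the identity follows by simply adding the two multiplied equations, integrating by parts, and taking real parts.
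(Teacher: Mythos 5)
Your proposal follows essentially the same route as the paper's proof: multiply the resolvent equations \eqref{3.4}$_{2}$ and \eqref{3.4}$_{4}$ by $q\overline{V}_{x}$ and $q\overline{P}_{x}$, substitute $i\lambda V=\Phi+F_{1}$ and $i\lambda P=\Theta+F_{3}$ to turn the kinetic terms into $q\,\tfrac{d}{dx}|\Phi|^{2}$ and $q\,\tfrac{d}{dx}|\Theta|^{2}$ plus the $R_{1},R_{2}$ remainders, use $\alpha=\alpha_{1}+\gamma^{2}\beta$ to diagonalize the stiffness form into $\alpha_{1}|V_{x}|^{2}+\beta|\gamma V_{x}-P_{x}|^{2}$, and integrate by parts using $q(0)=0$. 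The bookkeeping points you flag are exactly the ones the paper's computation handles, so the argument is correct and not materially different.
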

\begin{proof}
Multiplying \eqref{3.4}$_{2}$ by $q \overline{V}_{x}$ and integrating on $[0,L]$, we get
\begin{align}\label{4.1'}
\int^{L}_{0}\left(- i\lambda \rho \Phi q\overline{V}_{x}+\alpha qV_{xx}\overline{V}_{x}-\gamma\beta qP_{xx}\overline{V}_{x} \right)dx=-\int^{L}_{0}\rho qf_{2}\overline{V}_{x} dx.
\end{align}
Using \eqref{3.4}$_{1}$, we obtain
\begin{align}\label{4.2}
\int^{L}_{0} -i\lambda \rho \Phi q\overline{V}_{x}dx=\int^{L}_{0} \overline{(i\lambda V_{x})}\rho q \Phi dx=\int^{L}_{0}\rho q\Phi\overline{(\Phi_{x}+f_{1,x})}dx.
\end{align}
Multiplying \eqref{3.4}$_{4}$ by $q \overline{P}_{x}$ and integrating on $[0,L]$, we have
\begin{align}\label{4.3}
\int^{L}_{0}\left(- i\lambda \rho\Theta q\overline{P}_{x}+\beta qP_{xx}\overline{P}_{x}-\gamma\beta qV_{xx}\overline{P}_{x} \right)dx=-\int^{L}_{0}\mu qf_{4}\overline{P}_{x} dx.
\end{align}
Using \eqref{3.4}$_{3}$, we obtain
\begin{align}\label{4.4}
\int^{L}_{0}- i\lambda \rho \Theta q\overline{P}_{x}dx=\int^{L}_{0} \overline{(i\lambda P_{x})}\mu q \Theta dx=\int^{L}_{0}\mu q\Theta\overline{(\Theta_{x}+f_{3,x})}dx.
\end{align}
By combining \eqref{4.1'} with \eqref{4.3}, and employing \eqref{4.2} and \eqref{4.4} into it, we conclude that
\begin{align}\label{4.5}
&\int^{L}_{0} \rho q \frac{d}{dx} |\Phi|^{2} dx +\int^{L}_{0} \alpha_{1} q \frac{d}{dx}|V_{x}|^{2} dx +\int^{L}_{0} \mu q \frac{d}{dx}|\Theta|^{2}dx +\int^{L}_{0}\beta q \frac{d}{dx} |\gamma V_{x}-P_{x}|^{2} dx
\nonumber\\
=&Re\int^{L}_{0}\left(-2\mu q f_{4}\overline{P}_{x}-2\mu q \overline{f}_{3,x}\Theta -2\rho q f_{2}\overline{V}_{x}-2\rho q \Phi\overline{f}_{1,x}\right)dx.
\end{align}
Then, integrating by part, we obtain that the relation in Lemma \ref{lem3.3} is correct.
\end{proof}
\begin{lemma}\label{lem4.4}
Let $\mathcal{N}$, $I_{V}, I_{P}$ be functionals defined above, then they satisfy
\begin{align}\label{4.40}
\mathcal{N}^{2}\leq C\left(I_{V}+I_{P}+\|F\|^{2}_{\mathcal{H}}\right),
\end{align}
where $C$ is a constant.
\end{lemma}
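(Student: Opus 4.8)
The plan is to establish the inequality $\mathcal{N}^{2}\leq C(I_{V}+I_{P}+\|F\|_{\mathcal{H}}^{2})$ by choosing a suitable multiplier $q$ in Lemma \ref{lem3.3} and combining the resulting identity with the dissipation estimate \eqref{4.41-}. First I would take the simplest admissible choice $q(x)=x$ (so that $q(0)=0$, $q_{x}\equiv 1$, $q\in C^{2}([0,L])$), for which $I_{V}+I_{P}$ reduces to $L$ times the boundary energy at $x=L$ and the left-hand side of the Lemma \ref{lem3.3} identity becomes exactly $I_{V}+I_{P}-\mathcal{N}^{2}$. Rearranging gives
\begin{align*}
\mathcal{N}^{2}=I_{V}+I_{P}+R_{1}+R_{2},
\end{align*}
so the whole task is to bound $|R_{1}|+|R_{2}|$ by $\tfrac12\mathcal{N}^{2}+C\|F\|_{\mathcal{H}}^{2}$ (or by $\varepsilon\mathcal{N}^{2}+C_{\varepsilon}\|F\|_{\mathcal{H}}^{2}$ for small $\varepsilon$), after which the $\tfrac12\mathcal{N}^2$ term is absorbed into the left side.

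The estimation of $R_{1}$ and $R_{2}$ is routine Cauchy--Schwarz/Young work: each summand in $R_{1},R_{2}$ is $\|q\|_{\infty}$ times an $L^{2}$ pairing of one factor controlled by $\mathcal{N}$ (namely $\Phi$, $\Theta$, $V_{x}$, or the combination $\gamma V_x-P_x$, noting $P_x$ is recovered from $V_x$ and $\gamma V_x - P_x$) against one factor controlled by $\|F\|_{\mathcal{H}}$ (namely $f_{2}$, $f_{4}$, $f_{1,x}$, $f_{3,x}$, all of which are bounded by $\|F\|_{\mathcal{H}}$ since $f_{1},f_{3}\in H^{1}_{*}(0,L)$ and $f_{2},f_{4}\in L^{2}(0,L)$ enter the norm). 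Thus
\begin{align*}
|R_{1}|+|R_{2}|\leq C\|F\|_{\mathcal{H}}\,\mathcal{N}\leq \tfrac12\mathcal{N}^{2}+C'\|F\|_{\mathcal{H}}^{2},
\end{align*}
and substituting back yields $\tfrac12\mathcal{N}^{2}\leq I_{V}+I_{P}+C'\|F\|_{\mathcal{H}}^{2}$, which is \eqref{4.40} up to renaming the constant.

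One point that needs a little care, and which I expect to be the main (mild) obstacle, is the treatment of the term $-2\rho q\Phi\overline{f}_{1,x}$ (and its $\Theta,f_{3,x}$ counterpart): here the factor $\Phi$ is indeed controlled by $\mathcal{N}$ in $L^{2}$, so the pairing is fine, but one must be sure that the integration-by-parts steps in the proof of Lemma \ref{lem3.3} have already converted all the genuinely dangerous boundary terms into the clean $I_{V},I_{P}$ expressions — in particular there is no leftover term involving $\lambda$ or an uncontrolled trace. Since Lemma \ref{lem3.3} is stated with exactly the two remainders $R_{1},R_{2}$ and no $\lambda$-dependence, this is already guaranteed, so the proof of Lemma \ref{lem4.4} is essentially just the Young-inequality bookkeeping above together with the observation that $\|P_x\|_{L^2}\leq C\mathcal{N}$. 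I would close by writing out the constant $C$ explicitly in terms of $\rho,\mu,\alpha_{1},\beta,\gamma,L$ for completeness.
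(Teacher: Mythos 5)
Your proposal is correct and follows essentially the same route as the paper: take $q(x)=x$ in Lemma \ref{lem3.3} so the identity reads $\mathcal{N}^{2}=I_{V}+I_{P}+R_{1}+R_{2}$, bound $|R_{1}|,|R_{2}|\leq C\mathcal{N}\|F\|_{\mathcal{H}}$ by Cauchy--Schwarz (using that $\|P_{x}\|_{L^{2}}\leq C\mathcal{N}$), and absorb via Young's inequality. This is exactly the paper's argument, only written out in more detail.
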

\begin{proof}
Let $q(x)=x, x\in [0,L]$. From the result of Lemma \eqref{lem3.3}, we have
\begin{align}\label{4.41}
\mathcal{N}^{2}= L \left(I_{V}+I_{P}\right)-R_{1}-R_{2}.
\end{align}
Since the definition of $R_{1},R_{2}$, we conclude that
\begin{align}\label{4.42}
|R_{1}|\leq C\mathcal{N}\|F\|_{\mathcal{H}}, |R_{2}|\leq C\mathcal{N}\|F\|_{\mathcal{H}}.
\end{align}
Thanks to the estimate \eqref{4.42} and Cauchy-Schwartz inequality, it is straightforward to verify that the relation \eqref{4.40} is valid.
\end{proof}
\begin{theorem}\label{iR}
$i\mathbb{R}\in\rho(\mathcal{A})$, and $\rho(\mathcal{A})$ is the resolvent set of the operator $\mathcal{A}$.
\end{theorem}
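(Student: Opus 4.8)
The plan is to argue by contradiction in the standard resolvent fashion. Since $0\in\rho(\mathcal{A})$ by Lemma \ref{lem2.2} and $\mathcal{A}$ is the generator of a bounded $C_0$-semigroup, it suffices to show that $i\lambda I-\mathcal{A}$ is invertible for every real $\lambda\neq 0$. Suppose not: then, because $\mathcal{A}^{-1}$ is compact (it maps $\mathcal{H}$ into the domain $\mathcal{D}(\mathcal{A})$, which embeds compactly into $\mathcal{H}$ thanks to the Rellich--Kondrachov theorem, exactly as exploited in Lemma \ref{lem3.6}), the spectrum of $\mathcal{A}$ consists only of isolated eigenvalues of finite multiplicity. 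Hence there would exist $\lambda\in\mathbb{R}\setminus\{0\}$ and $U=(V,\Phi,P,\Theta,u,\eta)\in\mathcal{D}(\mathcal{A})$ with $\|U\|_{\mathcal{H}}=1$ and $i\lambda U-\mathcal{A}U=0$, i.e.\ system \eqref{3.4} holds with $F=0$.

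The first step is to extract the dissipation. Taking $F=0$ in \eqref{4.41-} (or directly from Lemma \ref{lem2.1}) gives $\operatorname{Re}\langle \mathcal{A}U,U\rangle_{\mathcal{H}}=0$, so $\xi_1|u|^2+\xi_2|\eta|^2=0$, whence $u=\eta=0$, and therefore $\Phi(L)=\Theta(L)=0$ by \eqref{4.40-}. Plugging $u=\eta=0$ and $F=0$ into the last two equations of \eqref{3.4} yields $\alpha V_x(L)-\gamma\beta P_x(L)=0$ and $\beta P_x(L)-\gamma\beta V_x(L)=0$; since the matrix $\begin{pmatrix}\alpha & -\gamma\beta\\ -\gamma\beta & \beta\end{pmatrix}$ has determinant $\alpha\beta-\gamma^2\beta^2=\alpha_1\beta>0$, we conclude $V_x(L)=P_x(L)=0$. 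Thus $U$ satisfies homogeneous Dirichlet data at $x=0$ and homogeneous Neumann data (in $V_x$ and $P_x$) at $x=L$.

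The second step is to show these overdetermined boundary conditions force $U=0$. From \eqref{3.4} with $F=0$ we have $\Phi=i\lambda V$, $\Theta=i\lambda P$, and the elliptic system $\alpha V_{xx}-\gamma\beta P_{xx}=-\lambda^2\rho V$, $\beta P_{xx}-\gamma\beta V_{xx}=-\lambda^2\mu P$ on $(0,L)$. Solving the $2\times2$ linear system for $(V_{xx},P_{xx})$ (again using $\alpha_1\beta>0$) expresses $(V_{xx},P_{xx})$ as a constant-coefficient linear combination of $(V,P)$; writing $W=(V,P)^T$, this is a second-order ODE system $W_{xx}=\lambda^2 M W$ with constant matrix $M$, together with the Cauchy data $W(0)=0$, $W_x(L)=0$. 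One more ingredient is needed because the Cauchy data are split between the two endpoints: use the functional $\mathcal{N}$ and Lemma \ref{lem4.4}. With $F=0$ the right-hand side of \eqref{4.40} is $C(I_V+I_P)$, and $I_V,I_P$ are built from $|\Phi(L)|^2$, $|\Theta(L)|^2$, $|V_x(L)|^2$ and $|(\gamma V_x-P_x)(L)|^2$, all of which vanish by Step~1. Hence $\mathcal{N}^2=0$, which gives $\int_0^L(\alpha_1|V_x|^2+\beta|\gamma V_x-P_x|^2)\,dx=0$ and $\int_0^L(\rho|\Phi|^2+\mu|\Theta|^2)\,dx=0$; combined with $V(0)=P(0)=0$ this yields $V\equiv P\equiv 0$ and $\Phi\equiv\Theta\equiv 0$, so $U=0$, contradicting $\|U\|_{\mathcal{H}}=1$.

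The main obstacle is the middle step: ruling out nonzero solutions of the split-boundary eigenvalue problem. The cleanest route is the one just sketched — invoking Lemma \ref{lem4.4} with $q(x)=x$, since there the homogeneous boundary terms $I_V,I_P$ collapse immediately once $u=\eta=V_x(L)=P_x(L)=0$ are established, and the observability-type inequality \eqref{4.40} then directly forces $\mathcal{N}=0$. (Alternatively one could integrate the ODE system $W_{xx}=\lambda^2MW$ explicitly and use a unique-continuation / Cauchy-data argument, but this requires a case analysis on the eigenvalues of $M$ and on whether $\lambda^2$ is a resonant value, so the energy-functional argument is preferable.) Finally, the case $\lambda=0$ is covered by Lemma \ref{lem2.2}, completing the proof that $i\mathbb{R}\subset\rho(\mathcal{A})$.
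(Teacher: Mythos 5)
Your proposal is correct, and its endgame coincides exactly with the paper's: dissipativity forces $u=\eta=0$, hence $\Phi(L)=\Theta(L)=0$; the last two equations of \eqref{3.4} together with the invertibility of the coupling matrix (determinant $\alpha_1\beta>0$) give $V_x(L)=P_x(L)=0$, so $I_V+I_P$ vanishes; and Lemma \ref{lem4.4} then forces $\mathcal{N}=0$ and hence $U=0$. Where you genuinely diverge is in the reduction to this situation. The paper runs the standard maximal-interval contradiction argument: it assumes $\sup\mathcal{M}<\infty$, produces a sequence $\lambda_n$ and normalized approximate eigenvectors $U_n$ with $i\lambda_nU_n-\mathcal{A}U_n=F_n\to0$, and pushes the same chain of estimates through asymptotically to get $U_n\to0$, contradicting $\|U_n\|=1$. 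You instead observe that $\mathcal{A}^{-1}$ is compact, so $\sigma(\mathcal{A})$ consists only of eigenvalues, and you may work with an exact eigenvector, turning every ``$\to0$'' into ``$=0$''. Your route is cleaner and sidesteps the delicate bookkeeping of the sequential argument (which, as written in the paper, glosses over why the resolvent norms blow up as $\lambda_n$ approaches the supposed finite supremum); the paper's route has the advantage of not needing compactness of the resolvent and is the one that generalizes. The one step you should not leave implicit is the compactness claim itself: from a bound on $\|\mathcal{A}U\|_{\mathcal{H}}$ you must invert the $2\times2$ system in $(V_{xx},P_{xx})$ (again using $\alpha_1\beta>0$) to conclude that $V,P$ are bounded in $H^2$ and $\Phi,\Theta$ in $H^1$, whence the graph-norm ball is precompact in $\mathcal{H}$ by Rellich--Kondrachov; note also that Lemma \ref{lem3.6}, which you cite for this, actually concerns compactness of the semigroup difference $e^{\mathcal{A}t}-e^{\mathcal{A}_0t}$, not of the resolvent, so the reference is a slight misattribution even though the underlying embedding argument is the right one.
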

\begin{proof}
Since the fact of $0\in \rho(\mathcal{A})$ which we have proved in Section 2, we have that the set
\begin{align*}
\mathcal{M}=\left\{\beta>0:(-i\beta,i\beta)\subset \rho (\mathcal{A})\right\}\neq \emptyset.
\end{align*}
If $\sup\limits_{\beta>0}\mathcal{M}=\infty$, there is nothing to prove. Next, we will consider $\sup {\mathcal{M}}< \infty$ by using reduction to absurdity. Assume that there exists $\lambda>0$ such that $\sup {\mathcal{M}}=\lambda <\infty$. Clearly $\lambda\notin \mathcal{M}$. Therefore, there exist $\lambda_{n} \in \mathcal{M}$ and $\overline{F}_{n}\in \mathcal{H}$ with $\|\overline{F}_{n}\|=1$ such that
\begin{align*}
\left\|(i\lambda_{n} I-\mathcal{A})^{-1}\overline{F}_{n}\right\|_{\mathcal{H}}\rightarrow \infty.
\end{align*}
Let us define $ \overline{U}_{n}=(i\lambda_{n}I-\mathcal{A})^{-1}\overline{F}_{n}$. Then we have that $i\lambda_{n}\overline{U}_{n}-\mathcal{A}\overline{U}_{n}=\overline{F}_{n}$. Denoting $U_{n}=\frac{\overline{U}_{n}}{\|(i\lambda _{n}-\mathcal{A})^{-1}\overline{F}_{n}\|_{\mathcal{H}}}$. Clearly, $U_{n}$ satisfies
\begin{align*}
i\lambda_{n}U_{n}-\mathcal{A}U_{n}=F_{n},
\end{align*}
where $F_{n}==\frac{\overline{F}_{n}}{\|(i\lambda _{n}-\mathcal{A})^{-1}\overline{F}_{n}\|_{\mathcal{H}}}$. Since $\|\overline{F}_{n}\|=1$ and $\|(i\lambda I -\mathcal{A})^{-1}\overline{F}_{n}\|_{\mathcal{H}}\rightarrow \infty$, we have $F_{n}\rightarrow 0$. Taking inner product with $U_{n}$ on $\mathcal{H}$, we obtain
\begin{align*}
i\lambda_{n}\|U_{n}\|^{2}-\langle \mathcal{A}U_{n},U_{n}\rangle _{\mathcal{H}}=\langle F_{n},U_{n}\rangle _{\mathcal{H}}.
\end{align*}
By taking the real part and the fact of $F_{n}\rightarrow 0$, we have that
\begin{align*}
-Re\langle \mathcal{A}U_{n},U_{n}\rangle _\mathcal{H}=Re\langle F_{n},U_{n}\rangle _{\mathcal{H}}\rightarrow 0,
\end{align*}
which implies that
\begin{align*}
\xi_{1}\left|u_{n}(t)\right|^{2}+\xi_{2}\left|\eta_{n}(t)\right|^{2}\rightarrow 0.
\end{align*}
Thanks to \eqref{2.1}, we obtain $\Phi_{n}(L),\Theta_{n}(L)\rightarrow 0$. Using \eqref{2.5} \eqref{2.6} and the fact of $u_{n}(t), \eta_{n}(t)\rightarrow 0$, we have $V_{x,n}(L,t),P_{x,n}(L,t)\rightarrow 0$ which implies that $I_{V}+I_{P}\rightarrow 0$.
By using Lemma \ref{lem4.4} and the fact of $F_{n}, u_{n}(t), \eta_{n}(t)\rightarrow 0$, we conclude that $U_{n}\rightarrow 0$.

This relation contracts with $\|U_{n}\|=1$. Therefore, by using reduction to absurdity, we have proved the theorem.
\end{proof}

Next, by recalling the fact of  Borichev and Tomilov theorem and Lemma \ref{lem4.4}, we prove our result of polynomial stability.
\begin{theorem}\label{thm4.5}
The piezoelectric system \eqref{2.3}-\eqref{2.7} with tip body decays polynomially as
\begin{align*}
\|{U}(t)\|_{\mathcal{H}}\leq \frac{C}{\sqrt{t}}\|{U}_{0}\|_{\mathcal{D}(\mathcal{A})}.
\end{align*}
\end{theorem}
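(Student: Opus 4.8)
The plan is to apply the Borichev--Tomilov theorem (Theorem \ref{thm4.1}) with $k=2$. Since $\mathcal{A}$ generates a bounded $C_0$-semigroup of contractions (Theorem \ref{thm2.1}) and $i\mathbb{R}\subset\rho(\mathcal{A})$ (Theorem \ref{iR}), it suffices to establish the resolvent estimate $\|(i\lambda I-\mathcal{A})^{-1}\|_{\mathcal{L}(\mathcal{H})}=O(|\lambda|^{2})$ as $|\lambda|\to\infty$, which then yields the decay rate $\|\mathcal{S}(t)\mathcal{A}^{-1}\|=o(t^{-1/2})$, i.e. the stated estimate $\|U(t)\|_{\mathcal{H}}\le C t^{-1/2}\|U_0\|_{\mathcal{D}(\mathcal{A})}$. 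I would argue by contradiction: suppose the estimate fails, so there exist $\lambda_n\in\mathbb{R}$ with $|\lambda_n|\to\infty$, $U_n=(V_n,\Phi_n,P_n,\Theta_n,u_n,\eta_n)\in\mathcal{D}(\mathcal{A})$ with $\|U_n\|_{\mathcal{H}}=1$, and $F_n=(F_{1,n},\dots,F_{6,n})\in\mathcal{H}$ such that $\lambda_n^{2}(i\lambda_n U_n-\mathcal{A}U_n)=F_n\to 0$ in $\mathcal{H}$; equivalently $i\lambda_n U_n-\mathcal{A}U_n=F_n/\lambda_n^{2}=:G_n$ with $\lambda_n^{2}G_n\to0$. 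The goal is to derive $\|U_n\|_{\mathcal{H}}\to0$, contradicting $\|U_n\|=1$.

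The first step is the dissipation identity: taking the real part of $\langle i\lambda_n U_n-\mathcal{A}U_n, U_n\rangle_{\mathcal{H}}=\langle G_n,U_n\rangle_{\mathcal{H}}$ and using \eqref{2.11} gives $\xi_1|u_n|^{2}+\xi_2|\eta_n|^{2}=\mathrm{Re}\langle G_n,U_n\rangle\le \|G_n\|_{\mathcal{H}}\to 0$, so $u_n,\eta_n\to0$, and since $\Phi_n(L)=u_n$, $\Theta_n(L)=\eta_n$, also $\Phi_n(L),\Theta_n(L)\to0$. Next, from the fifth and sixth components of the resolvent system \eqref{3.4} (with $F$ replaced by $G_n$), $i\lambda_n m_1 u_n+\alpha V_{n,x}(L)-\gamma\beta P_{n,x}(L)+\xi_1 u_n=m_1 G_{5,n}$ and its analogue: here is where the loss of powers of $\lambda$ enters, since $\lambda_n u_n$ need not be small a priori. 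I would instead bound $\lambda_n u_n$ by going back to $i\lambda_n m_1 u_n = m_1 G_{5,n}-\alpha V_{n,x}(L)+\gamma\beta P_{n,x}(L)-\xi_1 u_n$, so controlling $\lambda_n u_n$ reduces to controlling the boundary traces $V_{n,x}(L),P_{n,x}(L)$. The key is to combine Lemma \ref{lem3.3} (with $q(x)=x$, giving $\mathcal{N}^2 = L(I_V+I_P)-R_1-R_2$ as in Lemma \ref{lem4.4}) with an independent estimate on the boundary terms $I_V,I_P$, using the equations \eqref{3.4} to express $V_{n,x}(L),P_{n,x}(L)$ through $\lambda_n u_n,\lambda_n\eta_n$ and $G_n$, and then closing the loop. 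Concretely, one obtains $I_V+I_P\le C(|\lambda_n u_n|^2+|\lambda_n\eta_n|^2+\|G_n\|_{\mathcal{H}}^2)$ and, feeding this back through the boundary equations, an inequality of the form $|\lambda_n u_n|^2+|\lambda_n\eta_n|^2 \le C(\|U_n\|_{\mathcal{H}}\,\|G_n\|_{\mathcal{H}}+\lambda_n^2\|G_n\|_{\mathcal{H}}^2+\dots)$, whose right side tends to $0$ precisely because $\lambda_n^2 G_n\to0$; hence $\lambda_n u_n,\lambda_n\eta_n\to0$, and therefore $I_V+I_P\to0$.

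With $I_V+I_P\to0$ and $\|G_n\|_{\mathcal{H}}\to0$, Lemma \ref{lem4.4} gives $\mathcal{N}^2\le C(I_V+I_P+\|G_n\|_{\mathcal{H}}^2)\to0$. But $\mathcal{N}^2=\int_0^L(\rho|\Phi_n|^2+\mu|\Theta_n|^2+\alpha_1|V_{n,x}|^2+\beta|\gamma V_{n,x}-P_{n,x}|^2)\,dx$, which together with $u_n,\eta_n\to0$ is exactly $\|U_n\|_{\mathcal{H}}^2$ up to the tip terms $\tfrac{m_1}{2}|u_n|^2+\tfrac{m_2}{2}|\eta_n|^2\to0$; hence $\|U_n\|_{\mathcal{H}}\to0$, contradicting $\|U_n\|_{\mathcal{H}}=1$. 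This proves the resolvent bound with exponent $2$, and Theorem \ref{thm4.1} finishes the argument. The main obstacle I anticipate is the middle step: the factor $\lambda_n^2$ in front of $G_n$ is dictated by the need to absorb two powers of $\lambda_n$ arising from the second-order spatial operators coupled only through the boundary (one power when passing from $U_n$ to $\lambda_n u_n$ in the tip equations, and effectively another in reconstructing the interior energy from the boundary data), so getting the bookkeeping of powers of $\lambda_n$ exactly right — and verifying that no worse than $\lambda_n^2$ is lost — is the delicate part; the rest is the multiplier computation already packaged in Lemmas \ref{lem3.3} and \ref{lem4.4} plus routine Cauchy--Schwarz estimates.
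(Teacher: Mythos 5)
Your proposal is correct and follows essentially the same route as the paper: the dissipation estimate \eqref{4.41-} controls $u,\eta$, the tip equations \eqref{3.4}$_{5}$--\eqref{3.4}$_{6}$ control the boundary traces $V_x(L),P_x(L)$ at the cost of a factor $|\lambda|^{2}$, Lemma \ref{lem4.4} converts this into $\mathcal{N}^{2}\le C(1+|\lambda|^{2})\|U\|_{\mathcal{H}}\|F\|_{\mathcal{H}}+C\|F\|^{2}_{\mathcal{H}}$, and Borichev--Tomilov with $k=2$ gives the $t^{-1/2}$ rate. The only differences are presentational: the paper runs this as a direct inequality chain ending in $\|U\|_{\mathcal{H}}\le C|\lambda|^{2}\|F\|_{\mathcal{H}}$ rather than a contradiction argument, and the bound $|\lambda u|^{2}\le C|\lambda|^{2}\|U\|_{\mathcal{H}}\|F\|_{\mathcal{H}}$ comes straight from \eqref{4.41-}, so the ``feeding back'' loop you flag as delicate is not actually needed.
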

\begin{proof}
Thanks to \eqref{3.4}$_{1}$, \eqref{3.4}$_{3}$ and \eqref{4.41}, we arrive at
\begin{align*}
I_{V}+I_{P}\leq C(1+|\lambda|^{2})\|{U}\|_{\mathcal{H}}\|{F}\|_{\mathcal{H}}+C\|{F}\|^{2}_{\mathcal{H}},
\end{align*}
and by using \eqref{4.40}, we can obtain
\begin{align*}
\mathcal{N}^{2}\leq C(1+|\lambda|^{2})\|{U}\|_{\mathcal{H}}\|{F}\|_{\mathcal{H}}+C\|{F}\|^{2}_{\mathcal{H}}.
\end{align*}
Then, from relations \eqref{4.40} together with the definition of norm in $\mathcal{H}$, we get
\begin{align*}
\|{U}\|_{\mathcal{H}}\leq C|\lambda|^{2}\|{F}\|_{\mathcal{H}}
\end{align*}
for $|\lambda|>1$ large enough.
Finally, we can get the result of polynomial stability  by using Theorem \ref{thm4.1}.
\end{proof}
\subsection*{Acknowledgments}

%The authors express sincere thanks to the editors and anonymous reviewers for their constructive comments
%and suggestions that helped to improve this paper.
This work was supported by the National Natural Science Foundation of China [grant number 11771216], the Key Research and Development Program of Jiangsu Province (Social Development) [grant number BE2019725] and the Qing Lan Project of Jiangsu Province.

\section*{Declarations}
\subsection*{Conflict of interest}
 The authors declare that they have no conflict of interest.


\begin{thebibliography}{00}
\bibitem{maycmg2019}
M. Akil, Y. Chitouret, M. Ghader\ and\ A. Wehbe, Stability and exact controllability of a Timoshenko system with only one fractional damping on the boundary, Asymptot. Anal. {\bf 119} (2020), no.~3-4, 221--280.

\bibitem{alv2020}
M. S. Alves\ and\ R. N. Monteiro, Stabilization for partially dissipative laminated beams with non-constant coefficients, Z. Angew. Math. Phys. {\bf 71} (2020), no.~5, Paper No. 165, 15 pp.

\bibitem{bsw1996}
H. T. Banks, R. C. Smith\ and\ Y. Wang, Smart material structures-Modeling, estimation and control, Chichester, United Kingdom and New York/Paris: John Wiley Sons/Masson, 1996.

\bibitem{bm2020}
F. Belhannache\ and\ S. A. Messaoudi, On the General Stability of a Viscoelastic Wave Equation with an Integral Condition, Acta Math. Appl. Sin. Engl. Ser. {\bf 36} (2020), no.~4, 857--869.

\bibitem{bor2010}
A. Borichev\ and\ Y. Tomilov, Optimal polynomial decay of functions and operator semigroups, Math. Ann. {\bf 347} (2010), no.~2, 455--478.

\bibitem{car2019}
C. L. Cardozo,  M. A. Jorge Silva,  T. F. Ma\ and\ J. E. Mu\~{n}oz Rivera, Stability of Timoshenko systems with thermal coupling on the bending moment, Math. Nachr. {\bf 292} (2019), no.~12, 2537--2555.

\bibitem{giu2008}
A. Cuc, V. Giurgiutiu, S. Joshi\ and\ Z. Tidwell, Structural health monitoring with piezoelectric wafer active sensors for space applications, AIAA J. {\bf 45} (2007), no.~12, 2838-2850.

\bibitem{dag2014}
 C. Dagdeviren\ et al., Conformal piezoelectric energy harvesting and storage from motions of the heart, lung, and diaphragm, P. Natl. Acad. Sci. USA. {\bf 111} (2014), no.~5, 1927-1932.

\bibitem{dwg2010}
J. M. Dietl, A. M. Wickenheiser\ and\ E. Garcia, A Timoshenko beam model for cantilevered piezoelectric energy harvesters, Smart. Mater.  Struct. {\bf 19} (2010), no.~5, 055018.

\bibitem{eng2000}
K. J. Engel\ and\ R. Nagel, {\it One-parameter semigroups for linear evolution equations}, Graduate Texts in Mathematics, 194, Springer-Verlag, New York, 2000.


\bibitem{ertinm2008}
A. Erturk\ and\ D. J. Inman, A distributed parameter electromechanical model for cantilevered piezoelectric energy harvesters, J. Vib.  Acoust. {\bf 130} (2008), no.~4.

\bibitem{erturk2011}
A. Erturk\ and\ D. J. Inman, Piezoelectric energy harvesting, John Wiley \& Sons, 2011.

\bibitem{feng2018}
B. Feng, T. F. Ma, R. N. Monteiro\ and\ C. A. Raposo, Dynamics of laminated Timoshenko beams, J. Dynam. Differential Equations {\bf 30} (2018), no.~4, 1489--1507.

\bibitem{galdin2000}
C. Galassi  et al., eds. Piezoelectric materials: advances in science, technology and applications. Vol. 76. Springer Science  Business Media, 2012.


\bibitem{guzhu2016}
G. Y. Gu, L. M. Zhu, C. Y. Su, H. Ding\ and\ S. Fatikow,  Modeling and control of piezo-actuated nanopositioning stages: A survey, IEEE. T. Autom. Sci. Eng. {\bf 13} (2014), no.~1, 313-332.

\bibitem{HWW2020}
Z.-J. Han, G. Wang\ and\ J. Wang, Explicit decay rate for a degenerate hyperbolic-parabolic coupled system, ESAIM Control Optim. Calc. Var. {\bf 26} (2020), Paper No. 116, 20 pp.

\bibitem{HS2021}
F. Hassine\ and\ N. Souayeh, Stability for coupled waves with locally disturbed Kelvin-Voigt damping, Semigroup Forum {\bf 102} (2021), no.~1, 134--159.


\bibitem{jp2019}
M. A. Jorge Silva\ and\ S. B. Pinheiro, Improvement on the polynomial stability for a Timoshenko system with type III thermoelasticity, Appl. Math. Lett. {\bf 96} (2019), 95--100.

\bibitem{ka1969}
H. Kawai, The piezoelectricity of poly (vinylidene fluoride), Jpn. J. Appl. Phys. {\bf 8} (1969), no.~7, 975.

\bibitem{kam2016}
A. A. Keddi, T. A. Apalara\ and\ S. A. Messaoudi, Exponential and polynomial decay in a thermoelastic-Bresse system with second sound, Appl. Math. Optim. {\bf 77} (2018), no.~2, 315--341.

\bibitem{kim1987}
J. U. Kim\ and\ Y. Renardy, Boundary control of the Timoshenko beam, SIAM J. Control Optim. {\bf 25} (1987), no.~6, 1417--1429.

\bibitem{lx2017}
C. Li\ and\ T. J. Xiao, Polynomial stability for wave equations with Wentzell boundary conditions, J. Nonlinear Convex Anal. {\bf 18} (2017), no.~10, 1801--1814.

\bibitem{lkl2020}
W. Liu, X. Kong\ and\ G. Li, Lack of exponential decay for a laminated beam with structural damping and second sound, Ann. Polon. Math. {\bf 124} (2020), no.~3, 281--289.

\bibitem{liuzhao2019}
W. Liu\ and\ W. Zhao, Stabilization of a thermoelastic laminated beam with past history, Appl. Math. Optim. {\bf 80} (2019), no.~1, 103--133.

\bibitem{mmrpv2019}
A. Lo\ and\ N. Tatar, Stabilization of laminated beams with interfacial slip, Electron. J. Differential Equations {\bf 2015} (2015), No. 129, 14 pp.

\bibitem{tita2018}
T. K. Maryati, J. E. Mu\~{n}oz Rivera, A. Rambaud\ and\ O. Vera, Stability of an $N$-component Timoshenko beam with localized Kelvin-Voigt and frictional dissipation, Electron. J. Differential Equations {\bf 2018} (2018), Paper No. 136, 18 pp.

\bibitem{publi1996}
T. R. Meeker, Publication and proposed revision of ANSI/IEEE standard 176-1987, IEEE. T. Ultrason. Ferr. {\bf 43} (1996), no.~5, 717--772.

\bibitem{mess2018}
S. A. Messaoudi\ and\ W. Al-Khulaifi, General and optimal decay for a viscoelastic equation with boundary feedback, Topol. Methods Nonlinear Anal. {\bf 51} (2018), no.~2, 413--427.

\bibitem{mo2013}
K. Morris\ and\ A. \"{O}. \"{O}zer, Strong stabilization of piezoelectric beams with magnetic effects, 52nd IEEE Conference on Decision and Control {\bf 2013}, Paper No. 3014, 6 pp.

\bibitem{mo2014}
K. A. Morris\ and\ A. \"{O}zer, Modeling and stabilizability of voltage-actuated piezoelectric beams with magnetic effects, SIAM J. Control Optim. {\bf 52} (2014), no.~4, 2371--2398.

\bibitem{mun2015}
J. E. Mu\~{n}oz Rivera\ and\ A. I. \'{A}vila, Rates of decay to non homogeneous Timoshenko model with tip body, J. Differential Equations {\bf 258} (2015), no.~10, 3468--3490.

\bibitem{mus2021}
M. I. Mustafa, Optimal energy decay result for nonlinear abstract viscoelastic dissipative systems, Z. Angew. Math. Phys. {\bf 72} (2021), no.~2, 67.



\bibitem{p2018}
G. R. Peralta, Stabilization of the wave equation with acoustic and delay boundary conditions, Semigroup Forum {\bf 96} (2018), no.~2, 357--376. 

\bibitem{rfajm2019}
A. J. A. Ramos, M. M. Freitas, D. S. Almeida, S. S. Jesus\ and\ T. R. S. Moura, Equivalence between exponential stabilization and boundary observability for piezoelectric beams with magnetic effect, Z. Angew. Math. Phys. {\bf 70} (2019), no.~2, Paper No. 60, 14 pp.

\bibitem{rvma2017}
C. A. Raposo, O. Vera Villagr\'{a}n, J. E. Mu\~{n}oz Rivera\ and\ M. S. Alves, Hybrid laminated Timoshenko beam, J. Math. Phys. {\bf 58} (2017), no.~10, 101512, 11 pp.

\bibitem{sun2018}
M. Sunar, 2.22 Piezoelectric Materials, Comprehensive Energy Systems, 2018.

\bibitem{var2020}
D. H. Vargas, Stability of a hybrid system with tip load damped, J. Differential Equations {\bf 269} (2020), no.~9, 7042--7058.

\bibitem{weyl1910}
H. Weyl, \"{U}ber gew\"{o}hnliche Differentialgleichungen mit Singularit\"{a}ten und die zugeh\"{o}rigen Entwicklungen willk\"{u}rlicher Funktionen, Math. Ann. {\bf 68} (1910), no.~2, 220--269.

\bibitem{yw2017}
C. Yang\ and\ J. M. Wang, Exponential stability of an active constrained layer beam actuated by a voltage source without magnetic effects, J. Math. Anal. Appl. {\bf 448} (2017), no.~2, 1204--1227.

\bibitem{yang2006}%%%%
J. S. Yang, A review of a few topics in piezoelectricity, Appl. Mech. Rev. Nov. {\bf 59} (2006), no.~6, 335-345.









\end{thebibliography}
\end{document}